\documentclass[12pt,a4paper]{article}

\usepackage{bookmark}
\usepackage[dvipsnames]{xcolor}
\usepackage[english]{babel}
\usepackage{amssymb}
\usepackage{amsmath,mathrsfs}
\usepackage{indentfirst}
\usepackage{epsfig}
\usepackage{tikz}
\usetikzlibrary {matrix} 
\usepackage{tikz}
\usetikzlibrary{shadings} 

\usepackage{graphicx}
\usepackage{caption}
\usepackage{subcaption}
\usepackage{tikz} 
\usetikzlibrary{fit,positioning}
\usetikzlibrary{arrows,positioning,shapes,backgrounds,fit}

\def\cambD{\marginpar{\textcolor{blue}{Dung changed}}\textcolor{blue}}

\usepackage{amsmath, amsthm, amssymb, amsfonts} 
\usepackage{relsize}
\usepackage{geometry}
\usepackage{mathrsfs}
\usepackage{mathtools}
\usepackage{tikz}
\usepackage{tikz-cd}
\usepackage{listings}
\usepackage{extarrows}
\usepackage{cancel} 
\usepackage{thmtools}
\usepackage{thm-restate}
\usepackage[inline]{enumitem}
\usepackage{indentfirst}
\usepackage{cleveref}
\usepackage{empheq}
\usepackage{diagbox}
\usepackage{tcolorbox}
\usepackage{hyperref}

\hypersetup{
    colorlinks=true,       
    linkcolor=blue,          
    citecolor=green,        
    filecolor=cyan,         
    urlcolor=magenta        
}

\newcommand{\R}{\mathbb{R}}
\newcommand{\C}{\mathbb{C}}

\newcommand\norm[1]{\left\lVert#1\right\rVert}

\theoremstyle{definition}

\newtheorem{theorem}{Theorem}[section]

\newtheorem{definition}{Definition}[section]
\newtheorem{remark}{Remark}[section]

\theoremstyle{definition}

\DeclareMathOperator{\tr}{tr}

\mathchardef\mhyphen="2D

\begin{document} 

\title{Existence of Quantum Splines via Fourth-Order Gradient Flows}

\author{
{\sc Chun-Chi Lin} 
\thanks{Department of Mathematics, National Taiwan Normal University, Taipei, 116 Taiwan, \url{chunlin@math.ntnu.edu.tw}}, 
~ 
{\sc Yang-Kai Lue} 
\thanks{Department of Applied Mathematics, National Yang Ming Chiao Tung University, Taiwan, 
\url{luf961@gmail.com}}, 
~ and ~
{\sc Dung The Tran} 
\thanks{
VNU University of Science, \url{tranthedung56@gmail.com}} 
} 


\date{February 26, 2026}

\maketitle

\noindent \textbf{Keywords:} quantum spline, Riemannian cubic in $\mathbf{SU}(N)$, least-squares fitting, fourth-order gradient flow


\begin{abstract} 

We establish a rigorous existence theory for the quantum splines introduced by Brody, Holm, and Meier in Physical Review Letters (2012). These curves arise as solutions of a variational problem on the unitary group describing optimally controlled quantum evolutions. By formulating the problem within a geometric gradient–flow framework for Riemannian spline interpolation, we construct a well-posed fourth-order evolution whose asymptotic limits realize the desired quantum splines. The analysis requires adapting the variational structure to boundary conditions dictated by the physical model, which are not directly amenable to the setting in our recently developed framework for gradient flows of Riemannian spline interpolation. We show that, despite these difficulties, the modified system admits a rigorous analytical treatment, yielding both existence and a constructive procedure for generating quantum splines. Our results provide a mathematical foundation for the variational description of smooth quantum control trajectories and clarify the analytical structure underlying their formation. 

\end{abstract}

\tableofcontents

\baselineskip=15 pt plus .5pt minus .5pt

\section{
Introduction
}


Quantum splines, introduced by Brody, Holm, and Meier \cite{BHM12}, are smooth curves in the unitary group 
\[
\mathbf{SU}(N):=\{U\in M_N(\mathbb{C}): \det U=1, U U^\ast =\operatorname{Id}_{N\times N} \}
\] 
characterized by a variational principle. These curves minimize the trace norm of the time derivative of the associated Hamiltonian while ensuring that the induced trajectories in the state space pass either exactly or approximately through a prescribed sequence of target states at specified times. 
In other words, given an initial
state $\psi_{0}\in \mathbf{SU}(N)$ at physical time $x_0$, a set of target states $\phi_{1},\ldots, \phi_{q}$ in $\mathbf{SU}(N)$ at times $x_1,\ldots,x_q$, and an initial Hamiltonian $H_0$ at physical time $x_0$, a map 
$U: I=[x_0, x_q]\rightarrow \mathbf{SU}(N)\subset \mathbf{GL}(N;\mathbb{C})$ representing a curve in $\mathbf{SU}(N)$ 
is said to be a quantum spline if it satisfies the Schr\"{o}dinger equation 
\begin{equation}
\label{eq:Schoedinger_Eq}
\partial_x U=-\mathrm{i} H U  
\end{equation}
and is an equilibrium configuration of the functional 
\begin{equation} 
\label{eq:Action_J} 
\mathcal{J}_{QS}(H)=\frac{1}{2}\int_{I} \, 
\left\langle \mathrm{i} \partial_x H, \mathrm{i} \partial_x H \right\rangle 
\, dx 
+ \frac{1}{2\sigma^2} \sum_{j=1}^{q} \text{dist}^2_{\mathbf{SU}(N)}(U(x_j)\psi_0, \phi_j) 
\end{equation} 
with the boundary conditions $U(x=0)=U_0$ ($U_0$ may be chosen to be the identity) and  
$\partial_x U(x=0)=-\mathrm{i} H_0 U_0$. 
Here, $\sigma>0$ is a penalization parameter, typically chosen to be small, 
$x\in I=[x_0, x_q]$ denotes the physical time variable, 
and $\mathrm{i} H\in \mathfrak{su}(N)$ represents 
the Lie algebra, defined by 
\begin{equation*}
\mathfrak{su}(N):= 
\{X\in M_N(\mathbb{C}): X^\ast=-X, \tr{X}=0  \}.   
\end{equation*}

We recall that the inner product on 
$\mathbf{M}_{N}(\C)$ used in 
\eqref{eq:Action_J} is defined by
\begin{equation}
\label{eq:Matrix_inner_product}
\left\langle A,B \right\rangle = \tr(B^\ast A), 
\end{equation} 
where $B^\ast$ is the conjugate transpose of $B$. 
The associated matrix norm is 
\begin{align}
\label{def:norm-matrix}
|B| := (\tr(B^\ast B))^{1/2}.  
\end{align}  
Note that $M_N(\mathbb{C})$ can be viewed as the real Euclidean space $\mathbb{R}^{2N^2}$ equipped with the inner product $\langle\cdot,\cdot\rangle$ given in \eqref{eq:Matrix_inner_product}. 
Under this identification, each matrix $A=(a_{jk})$ is represented by the
$2N^2$ real components 
$(\operatorname{Re} a_{jk},\,\operatorname{Im} a_{jk})$, 
where $j,k=1,\cdots,N$, so the inclusion
$\mathbf{SU}(N)\subset M_N(\mathbb{C})$ becomes the standard embedding 
\[
\iota:\mathbf{SU}(N)\hookrightarrow\mathbb{R}^{2N^2},\qquad \iota(A)=A.
\]
This map is an isometric immersion, as $\mathbf{SU}(N)$ 
is equipped with the bi-invariant metric induced from the inner product $\langle\cdot,\cdot\rangle$. 

To enforce the pointwise differential constraint in \eqref{eq:Schoedinger_Eq}, Brody et al. in \cite{BHM12} utilized the method of Lagrange multipliers, introducing the augmented functional:
\begin{equation} 
\label{eq:Action_J_Q}
\mathcal{J}(H)
=\frac{1}{2}\int_{I} \, 
\left(
\left\langle \mathrm{i} \partial_x H, \mathrm{i} \partial_x H \right\rangle 
+ \left\langle M, 
\mathrm{i} H+\partial_x U \cdot U^{-1} \right\rangle \right) \, dx 
+ \frac{1}{2\sigma^2} \sum_{j=1}^{q} \text{dist}^2_{\mathbf{SU}(N)}(U(x_j)\psi_0, \phi_j).    
\end{equation} 
Here, $M=M(x)$ serves as the Lagrange multiplier enforcing the constraint 
$\mathrm{i} H+\partial_x U \cdot U^{-1}=0$, 
which arises directly from the Schrödinger equation. 
This augmented formulation replaces the direct minimization of $\mathcal{J}_{QS}$ in \eqref{eq:Action_J}, while ensuring that the pointwise constraint dictated by \eqref{eq:Schoedinger_Eq} is satisfied. 
The formulation of quantum splines, which are characterized as equilibrium configurations of a second-order system, 
permits the use of analytical tools from second-order elliptic PDEs and control theory, such as the Pontryagin Maximum Principle.

In this article, we choose a different formulation and approach for studying the quantum splines in \cite{BHM12}. 
Our main contribution is to establish an existence theory for the quantum splines of Brody, Holm, and Meier in \cite{BHM12}, via our recently developed framework for treating the $2k$-th order gradient flows of Riemannian spline interpolation and up to a modification of the boundary conditions, together with a rigorous and constructive procedure for obtaining such quantum splines as asymptotic limits of the associated gradient flow. 
Although the overall argument follows a similar structure to our framework for gradient flows of Riemannian splines, the nonstandard boundary conditions arising from the variational setting of Brody, Holm, and Meier introduce non-trivial complexities in applying Solonnikov's theory to the parabolic system of partial differential equations, in particular in verifying the complementary conditions. 
Specifically, in order to obtain a well-posed gradient flow, it is necessary to impose additional boundary conditions beyond those considered in \cite{BHM12} at the end point $x_q$.
One natural choice is to impose the second-order condition  
$$
D_x\partial_x U_q(t,x_q)=0, 
$$ 
since any equilibrium configuration without prescribed boundary data at $x_q$ must satisfy this relation. This follows directly from (the boundary terms of) the first variation formula applied to the functional 
\begin{align} 
\label{eq:J_2=int_|H_x|^2}
\mathcal{I}(H):=\frac{1}{2}\int_{I} \, \left\langle \mathrm{i} \partial_x H, \mathrm{i} \partial_x H \right\rangle \, dx. 
\end{align} 
This boundary condition is consistent with the variational formulation of quantum splines in \cite{BHM12}.  
Another option is to impose a Dirichlet boundary condition at the endpoint $x_q$, i.e., further impose 
$$ 
\partial_x U_q(t, x_q)=\varphi^\prime_q, 
$$ 
which is analogous to the condition prescribed at $x_0$. 
The second choice provides an alternative variational setting for quantum splines when the terminal quantum state $\varphi^\prime_q$ 
at the endpoint (or final time) is specified. 
These conditions necessitate significant refinements to the analysis in order to apply Solonnikov's theory to obtain the existence and construction results proved here. 
In contrast to the approach and results in \cite{BHM12}, where only a numerical scheme was proposed, 
our method specializes in the case $k=2$ with the modification of the boundary conditions considered in \cite{LT26} and establishes a constructive characterization of quantum spline interpolation. 
As one of the results, any asymptotic limit of global solutions to the fourth-order gradient flow studied here defines a quantum spline $U: [x_0, x_q] \rightarrow \mathbf{SU}(N)$, which satisfies the least-squares fitting property and the boundary conditions $U(x_0)=U_0$ and $\partial_x U(x_0)=\varphi^\prime_0:=-\mathrm{i} H_0 U_0$ within the class $C^{2}([x_0,x_q];\mathbf{SU}(N))$.


Although Brody, Holm, and Meier \cite{BHM12} have indicated that the quantum splines they defined are Riemannian cubics in $\mathbf{SU}(N)$, we include a detailed derivation for completeness in the following. 
Throughout the remainder of this article, we use the notation 
$M_x$, $M_{xx}$, etc., to denote the partial derivatives of the matrix-valued map $M$, that is, $\partial_x M$, $\partial_x^2 M$, and so on.
In \cite{BHM12}, the Hamiltonian for the quantum splines is given by \eqref{eq:J_2=int_|H_x|^2}. 
For $U: I=[x_0, x_q]\to \mathbf{SU}(N)$   
subject to \eqref{eq:Schoedinger_Eq}, 
the following identity holds:  
\begin{align*}  
\mathcal{I}(H) 
= \frac{1}{2}\int_{I} \, \left\langle (U_x\cdot U^{-1})_x, (U_x\cdot U^{-1})_x \right\rangle \, dx. 
\end{align*}

Below, we demonstrate that the $H$ in the 
Schr\"{o}dinger equation \eqref{eq:Schoedinger_Eq} 
is Hermitian and traceless:  
By differentiating the identity 
$U U^{\ast}=\operatorname{Id}_{N \times N}$, 
with respect to $x$, we obtain     
\begin{align}\label{eq:partial_x(U*)}
U_x^\ast = - U^\ast U_x U^\ast 
\text{.}
\end{align}
Thus,  
\begin{align}\label{H-Hermitian}
H^{\ast}=(\mathrm{i} U_x U^{\ast})^{\ast}=-\mathrm{i} U U^{\ast}_x=\mathrm{i} U_x U^{\ast}=H
\text{.} 
\end{align} 
By differentiating 
the constraint $\det U^{\ast} = 1$ 
with respect to $x$, and applying Jacobi’s formula 
$(\det Y)_x = \det Y \cdot \tr(Y^{-1} Y_x)$  
for a matrix $Y$, 
together with the identity $U^\ast=U^{-1}$ and the relation \eqref{eq:partial_x(U*)}, we obtain  
\[
0 = (\det U^{\ast})_x 
=\det U^{\ast} \cdot \tr( (U^{\ast})^{-1} U_x^{\ast})
=\tr(U U_x^{\ast})
= - \tr(U_x U^\ast)
= -\tr(-\mathrm{i} H).
\] 
These identities imply that  
\begin{align}\label{trace-H=0} 
\tr(H)=0.  
\end{align} 
From \eqref{H-Hermitian} and \eqref{trace-H=0}, we infer that $H$ is Hermitian and traceless. 
Consequently, 
\begin{equation}
\label{eq:iH=}
\mathrm{i}H=-U_x U^\ast \in \mathfrak{su}(N).
\end{equation}

Let $D$ denote the Levi–Civita connection on $\mathbf{SU}(N)$. Since this bi-invariant Riemannian metric can also be induced from the ambient Euclidean space, the connection $D$ can be expressed as the orthogonal projection of the ambient derivative onto the tangent bundle 
\[ 
T\mathbf{SU}(N) 
= \{(U, XU) \mid X \in \mathfrak{su}(N) 
\}. 
\] 
Note that for any matrix $AU$ with 
$A \in \mathbf{GL}(\mathbb{C},N)$ and 
$U \in \mathbf{SU}(N)$ 
the orthogonal projection onto the tangent space of $\mathbf{SU}(N)$ is expressed as 
\begin{align}\label{def:projection}
P(AU) = \operatorname{skew}_0(A)\, U,
\end{align} 
where 
\begin{align}\label{def:skew-operator}
\operatorname{skew}(A) = \frac{A - A^\ast}{2}, \qquad
\operatorname{skew}_0(A) 
= \operatorname{skew}(A)
- \frac{\tr(\operatorname{skew}(A))}{N}\, \operatorname{Id}_{N \times N} .
\end{align} 
Hence, we use 
\begin{align}
\label{D_xY=projection}
D_x Y=(\partial_x Y)^{T}=P(\partial_x Y), 
~~~~\forall\, Y\in T\mathbf{SU}(N) 
\end{align} 
to compute the covariant derivatives. 
Notice that, by letting $Y=U_x$ in \eqref{D_xY=projection} and applying the 
Schr\"{o}dinger equation \eqref{eq:Schoedinger_Eq}, we have 
\[
D_x U_x 
=P(-\mathrm{i}H_x U) +P(-H^2 U) 
\text{.}
\]
From \eqref{H-Hermitian}, \eqref{trace-H=0}, and \eqref{eq:iH=}, 
we have 
\begin{empheq}[left=\empheqlbrace\quad]{align}
\label{eq:skew(H)} 
\begin{split}
&\operatorname{skew}(\mathrm{i} \, \partial_x^\ell H)
= -\mathrm{i} \, \partial_x^\ell H
=\operatorname{skew}_{0}(\mathrm{i} \, \partial_x^\ell H), 
~~~ \forall\, \ell\in\mathbb{N}_0, 
\\ 
&\tr( \partial_x^\ell H) = 0, 
~~~ \forall\, \ell\in\mathbb{N}_0, 
\\ 
&\operatorname{skew}(H^2) 
=\frac{H^2 - (H^2)^\ast}{2}
= \frac{H^2 - H^2}{2} = 0. 
\end{split} 
\end{empheq} 
Therefore, from \eqref{def:projection} and \eqref{def:skew-operator}, we obtain the decomposition of the tangential and normal components, i.e.,  
\begin{empheq}[left=\empheqlbrace\quad]{align} 
\label{eq:d_x(U_x)=}
\begin{split}
\left(\partial_x U_x \right)^{T}
&= D_x U_x 
=P(\partial_x U_x) 
= -\mathrm{i} H_x U, 
\\ 
\left(\partial_x U_x \right)^{\perp}
&= \partial_x U_x - \left(\partial_x U_x \right)^{T}
= - H^2 U 
\text{.}
\end{split}
\end{empheq} 
Consequently, we obtain 
\begin{align}
\label{eq:E}
\mathcal{I}(H)=
\frac{1}{2} \int_I |D_x U_x|^2 \, dx
=: \mathcal{F}(U), 
~~~~ \text{ where }~~ 
H=\mathrm{i} U_x U^\ast, 
~ U\in \mathbf{SU}(N). 
\end{align} 
It follows from \eqref{eq:E} that the quantum splines defined in \cite{BHM12} coincide with the Riemannian cubics in $\mathbf{SU}(N)$ studied in the literature.
There is an extensive body of work on Riemannian cubics and, more generally, higher-order Riemannian splines on Riemannian manifolds; 
we mention only a few representative references here, namely \cite{NHP89, GGP03, N03, GHMRV12-1, GHMRV12-2, MTM17, HRW19, CSC22, CH24}, listed in chronological order, and refer the interested reader to these works for further details.  
In this article, $\mathbf{SU}(N)$ is regarded as a manifold equipped with its Lie group structure and a bi-invariant Riemannian metric.
Background material on covariant differentiation, Lie brackets, and the Riemannian curvature tensor in this setting can be found in 
\cite{Milnor63, Nomizu54}; see also Lemma 2 and Proposition 1 of \cite{BCC21} for a more direct formulation.

\section{Quantum Spline Interpolation via Gradient Flows}
\label{sec:Spline_Interpolation+Flow}

In this section, we apply the mathematical framework recently developed in \cite{LT26} to provide a constructive approach to establishing the existence of the quantum splines introduced in \cite{BHM12}.  
We note that there is a difference in the variational settings of \cite{BHM12} and \cite{LT26} regarding the boundary conditions at the endpoint $x_q$. 
Namely, in \cite{BHM12} no boundary conditions were prescribed at $x_q$ (denoted by $t_m$ therein), which implies that the induced gradient flow is under-determined.  
Our method in \cite{LT26} employs a fourth-order geometric heat flow approach, which not only provides a constructive method for obtaining solutions but also strengthens the practical feasibility of computing quantum splines. 
The prescribed points correspond to quantum circuits in the context of quantum computing. 
The quantum splines constructed in this work thus offer a Riemannian optimization framework over the special unitary group $\mathbf{SU}(N)$, exhibiting desirable optimization properties. 
The asymptotic limits of the global solutions to the gradient flow studied in this article satisfy those of quantum splines in \cite{BHM12}, although the notation and expressions are slightly different.

\vspace{2.5cm}  

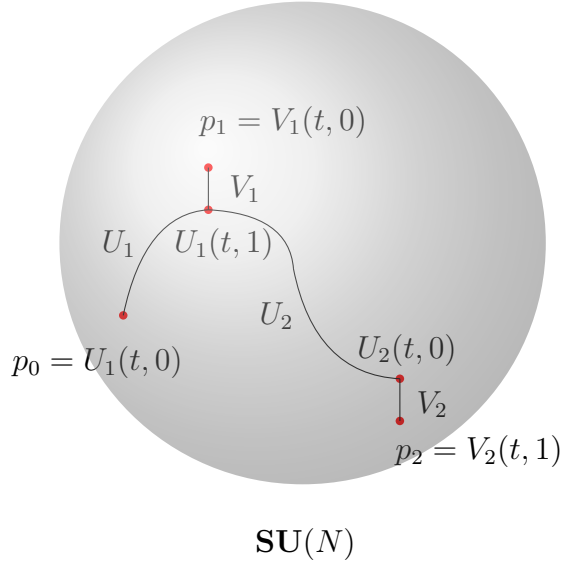
\begin{figure}[h]
\setlength{\unitlength}{0.56 mm}
\begin{center}
\begin{picture}(120, 75)

{\color{red}
\put(15,40){\circle*{2}} 
\put(35,65){\circle*{2}} 
\put(80,25){\circle*{2}} 
\put(35,75){\circle*{2}} 
\put(80,15){\circle*{2}} 
}

\put(-11,27){$p_0=U_1(t,0)$}
\put(27,55){$U_1(t,1)$}
\put(70,30){$U_2(t,0)$}
\put(33,84){$p_1=V_{1}(t,0)$}
\put(79,06){$p_2=V_2(t,1)$}
\put(10,55){$U_1$}
\put(47,38){$U_2$}
\put(40,68){$V_1$}
\put(84,17){$V_2$}
\put(46,-16){$\mathbf{SU}(N)$ }

\qbezier(15,40)(20,64)(35,65)
\qbezier(35,65)(35,67)(35,75)
\qbezier(35,65)(54,64)(55,51)
\qbezier(55,51)(60,26)(80,25)
\qbezier(80,25)(80,22)(80,15)



\begin{tikzpicture}
  \shade[ball color=gray!40, opacity=0.4] (0,0) circle (3.2cm);
\end{tikzpicture}





\end{picture}
\vspace{1.0cm}
\caption{The case of $q=2$.} 
\end{center}
\end{figure}

Denote by $\mathbb{N}=\{1,2,\ldots \}$ the set of natural numbers and by $\mathbb{N}_0=\{0,1,2,\ldots \}$ the set of non-negative integers. 
Let 
\[
\mathcal{P}=\{p_0, \ldots, p_q \}
\subset \mathbf{SU}(N)  
\] 
be a set of ordered (knot) points in the Lie group $\mathbf{SU}(N)$, 
where $p_0=\psi_0$ and $p_j=\phi_j \psi_0^{-1}$, $\forall\, j\in\{1,\cdots,q\}$. 
Define the set of admissible maps by 
\begin{align*} 
\Phi_{\mathcal{P}}
=& \big\{ 
(U, V_{1},\ldots, V_{q}) \big | \, 
U:[x_0, x_q]\rightarrow \mathbf{SU}(N), V_{l} : [x_{l-1}, x_l] \to \mathbf{SU}(N), 
~\forall\, l \in \{1, \ldots, q\}, 
\\ 
&~~ 
U\in C^{2}([x_0, x_q]), 
U_l \in C^{4,\alpha_1}([x_{l-1}, x_l]), 
V_l \in C^{2,\alpha_2}([x_{l-1}, x_l]), 
U_{1}(x_{0})=p_{0},
V_{q}(x_q)=U_{q}(x_q), 
\\ 
&~~ 
\partial_x U_1(x_0)=\varphi^\prime_0, 
V_l(x_{l-1})=p_l, ~\forall\, l=1, \ldots, q, 
V_{l}(x_l)=U_{l}(x_l)=U_{l+1}(x_{l}), 
~\forall\, l=1, \ldots, q-1   
\big\}, 
\end{align*} 
where $x_l=l,\,\forall\, l\in\{0,1,\ldots,q\}$, 
$I_{l}=(x_{l-1}, x_{l})$, 
$\alpha_1, \alpha_2 \in (0,1)$, 
$U_l=U_{|_{\bar I_l}}$, 
and $\varphi^\prime_0=-\mathrm{i}H_0 U_0$. 
Note that $U \in C^k (\bar I)$ means  
$\partial_{x}^\ell U \in C^{0}(\bar I)$, 
$\forall\, \ell \in\{0,1,\ldots, k\}$.

To handle the penalty term in \eqref{eq:Action_J}, we replace it with 
\begin{align*}
\frac{1}{\sigma^2} \sum_{l=1}^{q} \mathcal{T}(V_l)
= \frac{1}{2\sigma^2} \sum_{l=1}^{q}
\int_{I_l} \lvert \partial_x V_l(x) \rvert^2 \, dx,
\end{align*}
where, for each $l$, $V_l$ is a curve connecting $p_l$ and $U(x_l)$. 
When $V_l$ is a critical point of the tension energy functional $\mathcal{T}$, it is a geodesic in the underlying Riemannian manifold, and in this case $\mathcal{T}$ satisfies
\begin{align*}
\mathcal{T}(V_l)
= \frac{1}{2|I_l|}\bigl(\operatorname{Length}(V_l)\bigr)^2 .
\end{align*}
Note that if $V_l$ is a geodesic in a Riemannian manifold $M$ with sufficiently small length, then 
$\text{dist}_{M}(V_l(x=0),V_l(x=1))=\text{Length}(V_l)$. 
Consequently, we study the energy functional $\mathcal{F}_{\sigma}
[(U, V_{1}, \ldots, V_{q} )]$ 
defined in \eqref{eq:F-U-sigma-energy}.

In this article, we replace the energy functional in \eqref{eq:Action_J_Q} by 
$\mathcal{F}_{\sigma}: 
\Phi_{\mathcal{P}}
\rightarrow [0,\infty)$, defined as 
\begin{align} 
\label{eq:F-U-sigma-energy} 
\mathcal{F}_{\sigma}
[(U, V_{1}, \ldots, V_{q} )] 
&= \mathcal{F}(U)+ \frac{1}{2\sigma^2} \sum\limits_{l=1}^{q} \int_{I_l} | V_{l,x}(x)|^2 \, dx, 
\end{align} 
and study the associated gradient flow, which was derived in \cite{LT26} in a more general higher-order setting.
The variational quantum splines introduced in \cite{BHM12} are then analyzed by resolving the initial–boundary value problem for the gradient flow of \eqref{eq:F-U-sigma-energy}. 
Namely, we consider the initial-boundary conditions of the gradient flow of 
$U: [0, T] \times [x_0, x_q]\rightarrow \mathbf{SU}(N)$, 
$V_l: [0, T] \times [x_{l-1}, x_{l}]\rightarrow \mathbf{SU}(N)$, and $\forall\, l\in\{1,\cdots,q\}$, 
set up as an intrinsic version: 
\begin{align}\label{U_flow} 
\partial_t U_{l}
=&\mathcal{L}^4_x(U_l) 
:=-D_x^{3} \partial_x U_{l} - R(D_x \partial_x U_{l}, \partial_x U_{l}) \partial_x U_{l}, ~~~~~~~~~~~~~~~~~~~~\text{ in }  [0,T] \times I_l, 
\\ 
\label{V_flow}
\partial_t V_{l}=&\sigma^2 D_x \partial_x V_{l}
~~~~~~~~~~~~~~~~~~~~~~~~~~~~~~~~~~~~~~~~~~~~~~~~~~~~~~~~~~~~~~~~~
\text{ in }  [0,T] \times I_l, 
\end{align}  
where $R(\cdot,\cdot)\cdot$ denotes the Riemannian curvature tensor of $\mathbf{SU}(N)$ and is induced by the bi-invariant metric of the Lie group $\mathbf{SU}(N)$, 
with the initial-boundary conditions:  
\begin{align}
\label{Br_initial-condition}
&(U,V_{1},\cdots,V_{q})(0,\cdot)
=(U_{0}(\cdot), V_{1,0}(\cdot), \cdots, V_{q,0}(\cdot) ) \in \Theta_{\mathcal{P}}, 
\\ 
\label{Br_clampend-pass-point} 
&U_{1}(t, x_0)=p_0,
~~~~~~~~~~~~~~~~~~~~~~~~~~~~~~~~~~~~~~~~~~~~~~~~~~~~~~~~~~~~~~~~~~~~~~~~~~~~~~~~~~~~~ 
t \in [0, T], 
\\ 
\label{Br_clampend-first-second-derivative}
&
\partial_x U_{1}(t, x_0)=\varphi^\prime_0 
~~ \text{(a constant)}, 
~~~~~~~~~~~~~~~~~~~~~~~~~~~~~~~~~~~~~~~~~~~~~~~~~~~~~~~~~~~~~~~~ 
t \in [0, T], 
\\
\label{Br_boundary-condition-1}
& V_l(t,x_{l-1})=p_l, ~~~~~~~~~~~~~~~~~~~~~~~~~~~~~~~~~~~~~~~~~~~~~~~~~~~~~~~~~~~~~~~~~ l \in \{1, \ldots, q\}, t \in [0, T], 
\\
\label{Br_boundary-condition-2}
&U_{l}(t, x_l)=V_{l}(t, x_l)=U_{l+1}(t, x_l), ~~~~~~~~~~~~~~~~~~~~~~~~~~~~~~~~~~~~~~ 
l \in \{1, \ldots, q-1\}, t \in [0, T], 
\\
\label{Br_boundary-condition-3}
&\Delta_{x_l}[D_x^{\mu-1} U_x](t)=0, 
~~~~~~~~~~~~~~~~~~~~~~~~~~~~~~~~~~~~~~~ 
\mu \in \{1, 2\}, l \in \{1, \ldots, q-1\}, t \in [0, T], 
\\
\label{Br_boundary-condition-4}
&\Delta_{x_l}[D_x^2 \partial_x U](t)+\frac{1}{\sigma^2} \partial_x V_{l}(t, x_l)=0, 
~~~~~~~~~~~~~~~~~~~~~~~~~~~~~~~~~~ 
l \in \{1, \ldots, q-1\}, t \in [0, T], 
\\ 
\label{Br_boundary-condition-5}
&V_{q}(t, x_q)=U_{q}(t, x_q), ~~~~~~~~~~~~~~~~~~~~~~~~~~~~~~~~~~~~~~~~~~~~~~~~~~~~~~~~~~~~~~~~~~~~~~~~~~~~~ 
t \in [0, T],
\\
\label{Br_boundary-condition-6}
&  
D_x \partial_x U_q(t, x_q)=0, 
~~~\text{ or }~~~ 
\partial_x U_q(t, x_q)=\varphi^\prime_q 
~~ \text{(a constant)}, 
~~~~~~~~~~~~~~~~~~~~~~~~~~~~~  
t \in [0, T], 
\\
\label{Br_boundary-condition-7} 
&-D_x^2\partial_x  U_q(t, x_q)+\frac{1}{\sigma^2} \partial_x V_{q}(t, x_q)=0,  
~~~~~~~~~~~~~~~~~~~~~~~~~~~~~~~~~~~~~~~~~~~~~~~~~~~~~~~~
t \in [0, T].
\end{align} 
Here, $U_0$ denotes the initial data of $U$ and 
\begin{align*}
[\Delta_{l} D_x^{\mu-1} \partial_x U](t) 
:=D_x^{\mu-1} \partial_xU_{l+1}(t, x_{l})
-D_x^{\mu-1} \partial_x U_{l}(t, x_{l}), 
\forall\, \mu\in \mathbb{N}. 
\end{align*}

It is worth mentioning here that, when $M$ is a compact and connected Lie group with 
a bi-invariant metric, we have the following identity for the Riemannian curvature tensors: 
\begin{equation} 
\label{eq:Riem_Curv_Tensor_of_Lie_Grp}
R_M(X,Y)Z=-\frac{1}{4}[[X,Y],Z], 
\end{equation} 
where $D$ is the Levi–Civita connection induced from the bi-invariant Riemannian metric on $\mathbf{SU}(N)$, and $X, Y, Z \in TM$ are left-invariant vector fields on $\mathbf{SU}(N)$ 
(e.g., see \cite[Lemma 2]{BCC21}).

\bigskip

Below, we verify the following identities for the covariant derivatives of $U$:
\begin{empheq}[left=\empheqlbrace\quad]{align}
\label{eq:d^2_x(U_x)=new} 
\begin{split}
D_x U_x
&=-\mathrm{i} H_{x} U,
\\
D_x^2 U_x&= \left(- \mathrm{i} H_{xx} - \frac{1}{2}[H_x, H]\right)U, 
\\ 
D_x^3 U_x 
&=\left(-\mathrm{i} H_{xxx}+[H, H_{xx}]+\frac{1}{4}[[H_x,H],\mathrm{i} H] \right)U.
\end{split}
\end{empheq} 
It is straightforward to derive from the definition of $\text{skew}_0$ that 
\begin{align}
\label{eq:skew_0(AB)=[A,B]/2}
\text{skew}_0(AB)=\frac{1}{2}[A,B], 
~~~ \text{ if both } A, B \text{ are Hermitian, or skew-Hermitian}. 
\end{align} 
By applying \eqref{H-Hermitian},  \eqref{eq:d_x(U_x)=}, 
\eqref{eq:skew_0(AB)=[A,B]/2}, 
\eqref{eq:skew(H)}, and 
using \eqref{D_xY=projection} with $Y=D_xU_x$, 
we have 
\begin{align*} 
D_x^2 U_x
&=D_x(D_x U_x)
=P(\partial_x (-\mathrm{i}H_x U)) 
=P(-\mathrm{i} H_{xx} U- \mathrm{i}H_x (-\mathrm{i}H U)) 
\\ 
&= \text{skew}_0\left(-\mathrm{i} H_{xx}-H_x H\right) U 
= \left(-\mathrm{i} H_{xx}-\frac{1}{2}[H_x, H]\right) U.
\end{align*} 
Note that $[H_x,H]^\ast=-[H_x,H]$ 
(i.e., skew-Hermitian). 
Together with applying \eqref{H-Hermitian}, \eqref{eq:d_x(U_x)=}, \eqref{eq:skew_0(AB)=[A,B]/2}, \eqref{eq:skew(H)}, 
and 
using \eqref{D_xY=projection} with $Y=D^2_xU_x$ again, 
we obtain
\begin{align*} 
D_x^3 U_x
&=D_x(D^2_x U_x)
=P(\partial_x ((- \mathrm{i} H_{xx} - \frac{1}{2}[H_x, H])U)) 
\\ 
&=P((- \mathrm{i}H_{xxx} - \frac{1}{2}[H_x, H]_x)U)
+P((-\mathrm{i} H_{xx} - \frac{1}{2}[H_x, H])(-\mathrm{i} HU)) 
\\ 
&= \text{skew}_0\left(- \mathrm{i} H_{xxx} - \frac{1}{2}[H_x, H]_x -H_{xx}H + \frac{1}{2}[H_x, H](\mathrm{i} H)\right) U 
\\ 
&= \left(- \mathrm{i} H_{xxx} - \frac{1}{2}[H_x, H]_x -\frac{1}{2}[H_{xx},H] + \frac{1}{4}[[H_x, H], \mathrm{i} H]\right)  U.
\end{align*}
Finally, observe that 
\begin{align*}
- \frac{1}{2}[H_x, H]_x -\frac{1}{2}[H_{xx},H]
=&-\frac{1}{2}\Big((H_xH-HH_x)_x+(H_{xx}H-HH_{xx})\Big) 
=[H, H_{xx}],
\end{align*}
which yields the claimed expression for $D_x^3 U_x$.
The equivalence between the Euler–Lagrange equations for Riemannian cubics in $\mathbf{SU}(N)$ and for quantum splines, as described in  \cite{BHM12}, 
then follows by applying
\eqref{eq:Riem_Curv_Tensor_of_Lie_Grp} and \eqref{eq:d^2_x(U_x)=new}, namely, we obtain 
\begin{align} 
\nonumber 
D_x^3 U_x +R(D_xU_x, U_x)U_x 
=&\left(-\mathrm{i} H_{xxx}+[H, H_{xx}]+\frac{1}{4}[[H_x,H],\mathrm{i} H] \right)U-\frac{1}{4}[[-\mathrm{i} H_x,-\mathrm{i} H],-\mathrm{i} H]U 
\\
=&\left(-\mathrm{i} H_{xxx}+[H, H_{xx}] \right)U 
\text{.} 
\label{eq:Riem_Curv_Tensor} 
\end{align}

In the following definition 
(Definition \ref{def:compatibility_cond_order_zero}),  
we formulate the compatibility conditions of order $0$. These conditions ensure that the prescribed initial data are consistent with all boundary constraints imposed on $(U, V_1,\ldots, V_q)$. 
The compatibility conditions of order $0$ are required to guarantee that the solutions remain regular up to $t=0$; in particular, the functions  $\partial_t U_l$, $\{\partial_x^j U_l: j=0,\ldots,4\}$, $\partial_t V_l$, and 
$\{\partial_x^j V_l: j=0,\ldots,2\}$ 
extend continuously to the parabolic boundary.
For further background and related discussions of the compatibility conditions of order $0$, we refer the reader to \cite[Remark 3.4]{LT26}.

\begin{definition}[The compatibility conditions of order $0$]
\label{def:compatibility_cond_order_zero}
An initial datum $(U_0, V_{1,0},\ldots, V_{q,0})$ is said to fulfill the compatibility conditions of order $0$ for the gradient flow of $(U, V_{1},\ldots, V_{q})$ in \eqref{U_flow} and \eqref{V_flow} if the followings hold: 
\begin{equation}
\label{eq:cc-order-0-fitting}
\begin{cases} 
\mathcal{L}^4_x(U_{1, 0})(x_0)=0, 
\\
D_x \partial_x V_{l, 0}(x_l)=0, ~~~~~~~~~~~~~~~~~~~~~~~~~~~~~~~~~~~~~~~~~~~~~~~~~~~~~~~~~~~~~~~~~~~~~ 
l \in \{1, \ldots, q\}, 
\\
\mathcal{L}^4_x(U_{l, 0})(x_l)=\sigma^2  D_x \partial_x V_{l, 0}(x_l)=\mathcal{L}^4_x(U_{l+1, 0})(x_l), ~~~~~~~~~~~~~~~~~~~~~~~~~ 
l \in \{1, \ldots, q-1\}, 
\\ 
\mathcal{L}^4_x(U_{q, 0})(x_q)=\sigma^2 D_x \partial_x V_{q, 0}(x_q), 
\\
U_{1,0}(x_0)=p_0,  
\\
V_{l,0}(x_{l-1})=p_l, ~~~~~~~~~~~~~~~~~~~~~~~~~~~~~~~~~~~~~~~~~~~~~~~~~~~~~~~~~~~~~~~~~~~~~~~~ 
 l \in \{1, \ldots, q\}, 
\\
U_{l,0}(x_l)=V_{l,0}(x_l)=U_{l+1,0}(x_l), ~~~~~~~~~~~~~~~~~~~~~~~~~~~~~~~~~~~~~~~~~~~~~~~ 
l \in \{1, \ldots, q-1\},  
\\
V_{q,0}(x_q)=U_{q,0}(x_q),
\\ 
\partial_x U_{1, 0}(x_0)=\varphi^\prime_0, 
~~\text{(a constant)},  
\\
\Delta_{x_l}[D_x^{\mu-1} \partial_x U_{0}]=0, 
~~~~~~~~~~~~~~~~~~~~~~~~~~~~~~~~~~~~~~~~~~~~~~ 
\mu \in \{1, 2\}, l \in \{1, \ldots, q-1\},  
\\
\Delta_{x_l}[D_x^2 \partial_x U]+\frac{1}{\sigma^2} \partial_x V_{l,0}(x_l)=0, 
~~~~~~~~~~~~~~~~~~~~~~~~~~~~~~~~~~~~~~~~~~~~~~ 
l \in \{1, \ldots, q-1\},  
\\ 
D_x\partial_x U_{q,0}(x_q)=0 
~~~\text{ or }~~~ 
\partial_x U_{q,0}(x_q)=\varphi^\prime_q 
~~\text{(a constant)},  
\\
-D_x^2\partial_x  U_{q,0}(x_q)+\frac{1}{\sigma^2} \partial_x V_{q,0}(x_q)=0.  
\end{cases}
\end{equation} 
\end{definition}

By following the same argument in \cite{LT26}, the energy identity is a direct consequence as the solution $(U,V_1,\cdots,V_q)$ to the gradient flow remains in the proper parabolic H\"{o}lder space or parabolic Sobolev spaces. 
More precisely, by using the gradient flow  \eqref{U_flow}$\sim$\eqref{V_flow} 
and applying the boundary conditions 
\eqref{Br_clampend-pass-point}$\sim$\eqref{Br_boundary-condition-7} 
through integration by parts,  
we obtain
\begin{align*}
\frac{d}{dt}\mathcal{F}_\sigma\big[(U, V_{1}, \ldots, V_{q})\big]
=
-\sum_{l=1}^{q} \int_{I_l} |U_{l,t}|^2 \, dx
-\frac{1}{\sigma^2}\sum_{l=1}^{q} \int_{I_l} |V_{l,t}|^2 \, dx
\;\le\; 0 .
\end{align*}
As a direct consequence of the above energy identity, we deduce the following
uniform-in-time estimates:
\begin{equation}
\label{eq:(U+V)_{xx}_bdd}
\begin{cases}
\sum\limits_{l=1}^{q}  \|D_x U_{l,x}\|_{L^2(I_l)} (t) \leq \mathcal{F}_\sigma(0), 
\\ 
\sum\limits_{l=1}^{q}  \|V_{l,x}\|_{L^2(I_l)} (t) \leq \sigma^2 \mathcal{F}_\sigma(0),
\end{cases}
\end{equation}
where 
\[
\mathcal{F}_\sigma(0):=\mathcal{F}_\sigma[(U(0,\cdot), V_{1}(0,\cdot), \ldots, V_{q}(0,\cdot) )].
\]
Since $U(t,x)\in \mathbf{SU}(N)$ for all $(t,x)$, we have $U^*(t,x)U(t,x)=\operatorname{Id}_{N \times N}$. 
It follows immediately that
\[
|U(t,x)|^2
= \operatorname{tr}\!\big(U^*(t,x)U(t,x)\big)
= \operatorname{tr}(\operatorname{Id}_{N \times N})
= N.
\]
Hence $U$ is uniformly bounded with respect to the norm $| \cdot |$ in \eqref{def:norm-matrix}, and in fact 
\begin{equation}
\label{eq:U_{sup}-bdd}
|U(t,x)| = \sqrt{N}
\quad \text{for all } (t,x)\in [0, T]\times [x_0, x_q].
\end{equation} 
By applying \cite[Lemma 5.11]{LT26} (ii) with the estimates in \eqref{eq:U_{sup}-bdd} and \eqref{eq:(U+V)_{xx}_bdd}, 
we obtain 
\begin{align*}
\sum_{l=1}^{q}\|U_{l,x}\|_{L^2(I_l)}(t)\le C(\mathcal{F}_\sigma(0), N). 
\end{align*}

\begin{theorem}
\label{thm:Main_Thm_1} 
Let $\sigma \in (0,\infty)$ and $\alpha_1, \alpha_2 \in(0,1)$ with $\alpha_1>2 \alpha_2$.
Suppose the initial datum 
$(U_0, V_{1,0}, \ldots, V_{q,0})\in \Theta_{\mathcal{P}}$ 
satisfies the compatibility conditions of order $0$ for the gradient flow of $(U, V_{1}, \ldots, V_{q})$ in \eqref{U_flow} and \eqref{V_flow}. 
Then, there exists a global solution $(U, V_1, \ldots, V_{q})$ to 
\eqref{U_flow}$\sim$\eqref{Br_boundary-condition-7} 
with the following regularity properties: 
\begin{equation*}
\begin{cases}
U_{l} 
\in 
C^{\frac{4+\alpha_1}{4},4+\alpha_1}
\left([0,\infty)\times [x_{l-1}, x_{l}]\right) \bigcap C^{\infty}
\left((0,\infty)\times [x_{l-1}, x_{l}]\right), 
~~~~~~~~~
\forall\, l\in\{1,\ldots,q\}, 
\\
V_{l} 
\in 
C^{\frac{2+\alpha_2}{2},2+\alpha_2}
\left([0,\infty)\times [x_{l-1}, x_{l}]\right) \bigcap C^{\infty}
\left((0,\infty)\times [x_{l-1}, x_{l}]\right), 
~~~~~~~~~
\forall\, l\in\{1,\ldots,q\}, 
\\
U(t,\cdot) \in C^{2}([x_{0}, x_{q}]), ~~~~~~~~~~~~~~~~~~~~~~~~~~~~~~~~~~~~~~~~~~~~~~~~~~~~~~~~~~~~~~~~~~~ 
\forall\, t\in[0,\infty).
\end{cases}
\end{equation*}  
Furthermore, there exists $(U_{\infty}, V_{1, \infty}, \ldots, V_{q, \infty}) \in \Theta_{\mathcal{P}}$ with the regularity $U_{\infty} \in C^{2}([x_0, x_q])$, which is obtained from some convergent subsequences $U_{\infty}(\cdot)=\underset{t_j\rightarrow\infty}\lim U(t_j,\cdot)$ and $V_{l, \infty}(\cdot)=\underset{t_j\rightarrow\infty}\lim V_{l}(t_j,\cdot)$,  such that $U_{l, \infty}$ is a Riemannian cubic and $V_{l, \infty}$ behaves as a geodesic map, $\forall\, l$. 
\end{theorem}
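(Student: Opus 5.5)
We follow the scheme of \cite{LT26}, treating the system \eqref{U_flow}--\eqref{Br_boundary-condition-7} as a fourth-order parabolic system for $(U_1,\ldots,U_q)$ coupled through the junction points to a second-order system for $(V_1,\ldots,V_q)$. The steps are short-time existence, uniform estimates, global extension, and subconvergence. \textbf{Step 1 (short-time existence).} We work in the extrinsic formulation, viewing $U_l,V_l$ as maps into $M_N(\C)\cong\R^{2N^2}$ via the isometric embedding $\iota$. By \eqref{def:projection} and \eqref{eq:Riem_Curv_Tensor_of_Lie_Grp}, the principal parts are $-\partial_x^4 U_l$ and $\sigma^2\partial_x^2 V_l$, and all lower-order terms are polynomial in derivatives. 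We linearize around the initial datum and translate the problem to a common reference interval $[0,1]$ (the $x_l=l$ make this immediate). This gives a linear system with unknowns of mixed orders $4$ and $2$, to which we apply Solonnikov's theory in the parabolic H\"older spaces $C^{\frac{4+\alpha_1}{4},4+\alpha_1}\times C^{\frac{2+\alpha_2}{2},2+\alpha_2}$. The condition $\alpha_1>2\alpha_2$ is used, as in \cite{LT26}, to match the H\"older exponents of the coupling terms $\partial_x V_l(t,x_l)$ in \eqref{Br_boundary-condition-4} and \eqref{Br_boundary-condition-7}. A contraction argument then yields a local solution, with the order-$0$ compatibility conditions of Definition \ref{def:compatibility_cond_order_zero} guaranteeing regularity up to $t=0$. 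One must also check that the solution stays in $\mathbf{SU}(N)$. We do this by projecting the equations onto the tangent space, following \cite{LT26}; alternatively, a uniqueness argument for the distance to the group shows that the solution stays on $\mathbf{SU}(N)$.

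\textbf{Main obstacle: the complementing (Lopatinskii--Shapiro) condition.} This condition must be checked at $x_0$ and $x_q$ for the new boundary operators. At $x_0$ the conditions are $U=p_0$ and $\partial_xU=\varphi'_0$; these are clamped conditions for $\partial_t+\partial_x^4$, which are standard. At $x_q$ we have the coupling conditions $V_q=U_q$ and $-\partial_x^3U_q+\sigma^{-2}\partial_xV_q=0$, together with either $\partial_x^2U_q=0$ (principal part of $D_x\partial_xU_q=0$) or $\partial_xU_q=\varphi'_q$. We freeze coefficients and take the Laplace transform in $t$ with parameter $\lambda$, $\operatorname{Re}\lambda\ge0$, $\lambda\neq0$. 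The decaying solutions are then
\[
U=\sum_{j=1}^2 a_j e^{\mu_j y},\qquad V=b\,e^{\nu y},
\]
where $\mu_j^4=-\lambda$, $\sigma^2\nu^2=\lambda$, and the $\mu_j,\nu$ are chosen with the correct sign of real part. We then show that the homogeneous boundary system forces $a_1=a_2=b=0$. The plan is to compute the $3\times3$ determinant explicitly in each of the two cases and verify that it does not vanish. If the determinant computation becomes messy, the fallback is an energy argument: multiply by $\bar U$ and $\bar V$, integrate over $(0,\infty)$, and use the boundary conditions to show that all boundary terms have a sign. The interior junctions at $x_l$, $l\le q-1$, are handled exactly as in \cite{LT26}.

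\textbf{Step 2 (uniform estimates and global existence).} From the energy identity we have \eqref{eq:(U+V)_{xx}_bdd}, from \eqref{eq:U_{sup}-bdd} we have $|U|=\sqrt N$, and we already have the $L^2$ bound on $U_x$. We then bootstrap higher-order bounds $\|D_x^kU_{l,x}\|_{L^2}$ and $\|D_x^kV_{l,x}\|_{L^2}$ using \cite[Lemma 5.11]{LT26} (Gagliardo--Nirenberg-type interpolation). To do this we differentiate \eqref{U_flow}--\eqref{V_flow} in time, where the time-derivative boundary conditions inherit the same structure, and use the energy-type identities for $\partial_t^m$ of the solution. The boundary terms at $x_q$ vanish or cancel because of \eqref{Br_boundary-condition-6}--\eqref{Br_boundary-condition-7}. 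These bounds are uniform on finite time intervals, so the solution cannot blow up and extends globally. Parabolic smoothing then gives $C^\infty$ regularity for $t>0$. The regularity $U(t,\cdot)\in C^2$ comes from the continuity conditions \eqref{Br_boundary-condition-2}--\eqref{Br_boundary-condition-3}.

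\textbf{Step 3 (asymptotics).} Integrating the energy identity in time gives
\[
\int_0^\infty\Bigl(\sum_l\|U_{l,t}\|_{L^2}^2+\sigma^{-2}\|V_{l,t}\|_{L^2}^2\Bigr)\,dt<\infty .
\]
Hence there is a sequence $t_j\to\infty$ along which $\|U_t\|_{L^2},\|V_t\|_{L^2}\to0$. By the time-uniform higher-order bounds and Arzel\`a--Ascoli, a subsequence converges in $C^{4}$ on each $\bar I_l$ for $U$ and in $C^2$ for $V$. The limit satisfies $\mathcal{L}^4_x(U_{l,\infty})=0$, so it is a Riemannian cubic, and $D_x\partial_xV_{l,\infty}=0$, so it is a geodesic. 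All boundary conditions pass to the limit, which places the limit in $\Theta_{\mathcal{P}}$ with $U_\infty\in C^2([x_0,x_q])$.
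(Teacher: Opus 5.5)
Your overall architecture (linearize, apply Solonnikov, contract, then energy and interpolation bounds, then subconvergence) matches the paper. However, Step~1 has a genuine gap in how the linear problem is set up.

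You treat the linearized $(U,V)$-problem as \emph{one} system with unknowns of orders $4$ and $2$. You then propose to verify the complementing condition at $x_q$ via a $3\times3$ determinant built from $\mu_j^4=-\lambda$ and $\sigma^2\nu^2=\lambda$. Such a system is not parabolic in Solonnikov's sense. His theory counts $\partial_t$ with one common weight $2b$ in all equations, and no choice works here: with $b=2$ the principal part of the $V$-equation is $\partial_tV$ alone, and with $b=1$ that of the $U$-equation is $\partial_x^4U$ alone.

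Correspondingly, your determinant is not quasi-homogeneous. For the condition $\partial_x^2U_q=0$ it equals, up to the factor $\mu_1-\mu_2$, the expression $\lambda+\sigma^{-2}\nu(\mu_1+\mu_2)$. This mixes terms of size $|\lambda|$ and $|\lambda|^{3/4}$. It does not vanish, but pointwise non-vanishing is not a hypothesis of \cite[Theorem 4.9]{Solonnikov65} and gives no Schauder estimate by itself. Your energy fallback only reproves the same pointwise statement. Treating the coupled problem directly would need a mixed-order (Newton-polygon type) parabolic theory, a different and much heavier tool than the one you cite.

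The missing idea is the decoupling used in the paper, and in \cite{LT26}, which you already invoke for the interior junctions. Your treatment of $x_q$ thus does not even match what you adopt at $x_1,\dots,x_{q-1}$. Inside the fixed-point map:
\begin{itemize}
\item The term $\sigma^{-2}\partial_y h$ in the third-order conditions is taken from the previous iterate $\bar h$ and moved into the data of the fourth-order problem \eqref{eq:higher-order-linear-g_l}.
\item The conditions $V_l=U_l$, in particular $V_q=U_q$, are assigned to the second-order problem \eqref{eq:second-order-linear-h_l} as Dirichlet data, given by the trace of the $g$ just computed.
\end{itemize}
Each subsystem is then parabolic on its own. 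At $x_q$ the fourth-order problem carries only the pair $(\partial_y^2,\partial_y^3)$ or $(\partial_y,\partial_y^3)$. The complementing condition therefore reduces to $2\times2$ determinants, namely $\mathrm{i}\,\xi_1^2\xi_2^2(\xi_2^2-\xi_1^2)\neq0$ or $\mathrm{i}\,\xi_1\xi_2(\xi_2^2-\xi_1^2)\neq0$, together with the clamped pair at $x_0$.

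This decoupling is also where $\alpha_1>2\alpha_2$ enters. It makes the trace $g_l(\cdot,y^{\ast}_l)\in C^{\frac{4+\alpha_1}{4}}$ admissible Dirichlet data in $C^{\frac{2+\alpha_2}{2}}$ for the $h$-problem. The exponent of $\partial_y\bar h_l(\cdot,y^{\ast}_l)$, by contrast, is compatible for every $\alpha_1<1$, since $\frac{1+\alpha_2}{2}>\frac{1+\alpha_1}{4}$.

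A smaller point: in Step~2 you only claim bounds uniform on finite time intervals, but Step~3 needs time-uniform bounds for Arzel\`a--Ascoli. These come, as in the paper, from $\frac{d}{dt}\mathcal{Z}_\mu+\mathcal{Z}_\mu\le C(N,p_0,\sigma,\mathcal{F}_\sigma(t_0))$. There the interpolation uses only the time-independent bounds on $|U|$, $\|U_x\|_{L^2}$ and the energy.
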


\section{Solutions to the Gradient Flow } \label{sub:solution-gradient-flow}

For clarity and completeness, we present the following outline of the proof, which establishes the existence of solutions for the governing nonlinear parabolic system.

\begin{description}
\item[1.] Preparatory Steps (\S\ref{sec:Pre}). 

These steps set up the analytical framework underlying the entire proof.
\begin{itemize}
\item Use of Standard PDE Tools: 
The chosen extrinsic formulation permits the direct application of the classical theory for parabolic systems, most notably Solonnikov’s theory, which is fundamental for establishing existence results.

\item Systematic Treatment of Boundary Conditions: 
A coherent organization of the boundary conditions within the parabolic framework is achieved by reparametrizing certain curve segments with reversed orientations and introducing suitable notation.
\end{itemize}

\item[2.] Short-Time Existence or Local Solutions (\S\ref{sec:STE}). 

Local existence is proved through a two-stage argument.
\begin{itemize}
\item Linearization and Application of Solonnikov’s Theory: 
As a first step, we linearize the nonlinear parabolic system to which Solonnikov's theory will be applied. 
Because the components $g_l$ and $h_l$ possess \emph{different orders of parabolicity}, the linearized problem splits into two linear subsystems coupled through their boundary conditions.
A key step is the derivation of the necessary \emph{a priori} estimates for the linearized system. The main technical difficulty lies in verifying the \emph{complementary conditions}, a subtle but essential requirement for well-posedness.

\item Banach Fixed Point Argument: 
Using the \emph{a priori} estimates for the linearized problem, we apply the Banach Fixed Point Theorem to construct short-time solutions of the full nonlinear system.
To obtain the desired regularity up to the initial time, especially at the boundary, it is crucial to impose compatibility conditions on the initial data. These conditions enable the bootstrapping procedure and ensure that Solonnikov’s theory yields the higher regularity required for the local solutions.
\end{itemize}

\item[3.] Long-Time Existence or Global Solutions (\S\ref{sec:LTE}). 

Global existence follows from uniform estimates.
\begin{itemize}
\item Uniform Higher-Order Bounds.
We establish uniform bounds on higher-order derivatives of $U_{l}$ and $V_{l}$, valid for all $t$ and each fixed $l$, by combining Gagliardo–Nirenberg interpolation inequalities with Gr\"{o}nwall’s inequality.
These estimates allow us to implement a contradiction argument, which leads to long-time existence.

\item Asymptotic Analysis.
With the uniform bounds in place, we study the asymptotic behaviour of solutions and prove the existence of limits of $U(t_j,\cdot)$ along suitable convergent subsequences.
\end{itemize}

\end{description}

\subsection{The Preparatory Steps} 
\label{sec:Pre}

We rewrite the system \eqref{U_flow}$\sim$\eqref{Br_boundary-condition-7} in the following extrinsic formulation: 
\begin{equation}
\begin{cases}
\partial_t U_{l}=- \partial_x^4 U_{l} + G(\partial_x^3 U_{l}, \partial_x^2 U_{l}, \partial_x U_{l}, U_l), ~~~~~~~~~~~~~~~~~~~~~~~~~~~~~~~~~~~~~~~~
\text{ in }  [0,T] \times I_l, 
\\
\partial_t V_{l}=\sigma^2 \partial_x^2 V_{l}+ \sigma^2 b_1(\partial_x V_l, V_l), ~~~~~~~~~~~~~~~~~~~~~~~~~~~~~~~~~~~~~~~~~~~~~~~~~~~~
\text{ in }  [0,T] \times I_l, 
\\
(U,V_{1},\cdots, V_{q})(0,\cdot)=(U_{0}(\cdot), V_{1,0}(\cdot),\cdots, V_{q,0}(\cdot) ) \in \Theta_{\mathcal{P}}, 
\\
U_0(t, x_0)=p_0, 
\\
\partial_x U_1(t, x_0)=\varphi^{\prime}_0 
~~ \text{(a constant)},  
\\
V_l(t, x_{l-1})=p_l, ~~~~~~~~~~~~~~~~~~~~~~~~~~~~~~~~~~~~~~~~~~~~~~~~~~~~~~~~~~~~~~~~~~~~~~~~
l \in \{1, \ldots, q\}, 
\\
U_{l}(t, x_l)=V_{l}(t, x_l)=U_{l+1}(t, x_l), ~~~~~~~~~~~~~~~~~~~~~~~~~~~~~~~~~~~~~~~~~~~~~ 
l \in \{1, \ldots, q-1\}, 
\\
\partial_x^{\mu} U_l(t, x_l)-\partial_x^{\mu} U_{l+1}(t, x_l)=0,
~~~~~~~~~~~~~~~~~~~~~~~~~~~~~~~~~ 
\mu \in \{1, 2\}, l \in \{1, \ldots, q-1\}, 
\\
-\partial_x^{3} U_l(t, x_l)+\partial_x^{3} U_{l+1}(t, x_l)+\frac{1}{\sigma^2} \partial_x V_{l}(t, x_l) 
\\ ~~~~~~~~~~ 
=-b_2(\partial_x^2 U_{l+1}, \partial_x U_{l+1}, U_{l+1} )(t, x_l) 
+ b_2(\partial_x^2 U_l, \partial_x U_{l}, U_{l})(t, x_l),
~ l \in \{1, \ldots, q-1\}, 
\\ 
V_{q}(t, x_q)=U_{q}(t, x_q), 
\\ 
\partial_x^2 U_q(t, x_q)= b_1(\partial_xU_q,  U_q)(t, x_q) 
~~ \text{ or } ~~ 
\partial_x U_q(t, x_q)=\varphi^{\prime}_q 
~~ \text{(a constant)}, 
\\ 
\partial_x^3  U_q(t, x_q)=\frac{1}{\sigma^2} \partial_x V_{q}(t, x_q) -b_2(\partial_x^2 U_q, \partial_x U_{q},  U_{q})(t, x_q).
\end{cases} 
\end{equation} 
where 
\begin{align}  
\nonumber 
G(\partial_x^3 W, \partial_x^2 W, \partial_x W, W) 
:=&2\, \partial_x^3 W \cdot  W^\ast \cdot \partial_x W
+2\, \partial_x W \cdot  W^\ast \cdot \partial_x^3 W 
-4\,\partial_x^2 W  \cdot (W^\ast \cdot \partial_x W)^2
\\\nonumber 
&-4\,W^\ast \cdot \partial_x W \cdot \partial_x^2 W  \cdot W^\ast \cdot \partial_x W
-4\, (\partial_x W \cdot W^\ast)^2 \cdot \partial_x^2 W\\
&
+3\, \partial_x^2 W \cdot W^\ast \cdot \partial_x^2 W
+6\,(\partial_x W  \cdot W^\ast)^3 \cdot \partial_x W,
\label{eq:def_G} 
\\ 
\label{def:b_1}
b_1(\partial_x W, W)
:=& -\partial_x W \cdot W^{\ast} \cdot \partial_x W, 
\\
\label{def:b_2}
b_2(\partial_x^2 W, \partial_x W, W)
:=& -\frac{3}{2}\partial_x^2 W \cdot W^{\ast} \cdot \partial_x W+2 (\partial_x W \cdot W^{\ast})^2 \cdot \partial_x W -\frac{3}{2}\partial_x W \cdot W^{\ast} \cdot \partial_x^2 W, 
\end{align} 
for all $W\in \mathbf{SU}(N)$.  
Note that we obtain \eqref{eq:def_G} using \eqref{U_flow}, \eqref{eq:Riem_Curv_Tensor}, \eqref{eq:Schoedinger_Eq}, and \eqref{eq:partial_x(U*)}; \eqref{def:b_1} follows from \eqref{eq:Schoedinger_Eq}, \eqref{eq:partial_x(U*)}, and \eqref{eq:d_x(U_x)=}; and \eqref{def:b_2} from \eqref{eq:Schoedinger_Eq}, \eqref{eq:partial_x(U*)}, and the second identity in \eqref{eq:d^2_x(U_x)=new}.

Using the assumption $x_l=l$ for all $l\in\mathbb{Z}$, 
we introduce the parametrization: 
\begin{align} 
\label{eq:parametrization-gamma}
g_{l}(t,y) = U_{l}\left(t, (-1)^{l-1}y + x_{2\left[\frac{l}{2}\right]}\right),
\quad g_{l,0}(y) = U_{l,0}\left((-1)^{l-1}y + x_{2\left[\frac{l}{2}\right]}\right), 
\end{align}  
\begin{align}
\label{eq:parametrization-chi}
h_{l}(t,y)=V_{l}(t, (-1)^{l-1}y+ x_{2\left[\frac{l}{2}\right]}), 
\quad 
h_{l,0}(y)=V_{l,0}((-1)^{l-1}y+x_{2\left[\frac{l}{2}\right]}).   
\end{align}  
For even $l$, the orientations of $g_{l}$ and $h_{l}$  are obtained by reversing those of $U_{l}$ and $V_{l}$, respectively. 
This transformation allows us to place the problem into a form amenable to Solonnikov’s theory and the Banach Fixed Point Theorem, which are used to establish short-time existence. 
Applying \eqref{eq:parametrization-gamma}$\sim$\eqref{eq:parametrization-chi}, the evolution equations for $(U, V_{1}, \ldots, V_{q})$ in
\eqref{U_flow}$\sim$\eqref{Br_boundary-condition-7}
can be rewritten as a nonlinear parabolic system for $(g,h)$: 
\begin{equation}
\label{eq:nonlinear-g-h_l}
\begin{cases}
\partial_t g_{l}+\partial_y^{4} g_{l}
=G(\partial_y^{3} g_{l}, \partial_y^2 g_{l}, \partial_y g_{l}, g_{l}), 
~~~~~~~~~~~~~~~~~~~~~~~~~~~~~~~~~~~~~~~~~~~~~~ 
l \in\{1, \ldots, q\}, 
\\ 
\partial_t h_{l}= \sigma^2 \partial_y^{2} h_{l} 
+ \sigma^2  b_1(\partial_y h_l, h_l) ~~~~~~~~~~~~~~~~~~~~~~~~~~~~~~~~~~~~~~~~~~~~~~~~~~~~~ 
l \in\{1, \ldots, q\}, 
\\ 
g_{l}(0,y)= g_{l,0}(y), 
~~~~~~~~~~~~~~~~~~~~~~~~~~~~~~~~~~~~~~~~~~~~~~~~~~~~~~~~~~~~~~~~~~~~~ 
l\in \{1, \ldots, q\}, 
\\
g_{1}(t,0)= p_{0},  
\\ 
\partial_{y}g_{1}(t,0)=\varphi^{\prime}_0 
~~ \text{(a constant)},  
\\ 
g_{l}(t, y^{\ast}_{l})
=h_{l}(t, y^{\ast}_{l})
=g_{l+1}(t, y^{\ast}_{l}), 
~~~~~~~~~~~~~~~~~~~~~~~~~~~~~~
l \in \{1, \ldots, q-1\}, ~
 y^{\ast}_l= 2\{\frac{l}{2}\},  
\\ 
\partial_{y}^{\mu-1}g_{l}(t,  y^{\ast}_{l})
+ (-1)^{\mu-1} \partial_{y}^{\mu} g_{l+1}(t, y^{\ast}_{l})=0, 
~~~~
\mu \in \{1, 2\}, 
~ l \in \{1, \ldots, q-1\}, 
~ y^{\ast}_{l}= 2\{\frac{l}{2}\},  
\\
\partial_{y}^{3} g_{l}(t, y^{\ast}_{l})
+\partial_{y}^{3} g_{l+1}(t, y^{\ast}_{l})+\frac{1}{\sigma^2}\partial_{y} h_{l}(t, y^{\ast}_l)=b_2(\partial_{y}^{2} g_l, \partial_{y} g_l,  g_l)(t,  y^{\ast}_{l}) 
\\
~~~~~~~~~~~~~~~~~~~~~~~~~~~~~~ 
-b_2(\partial_{y}^{2}g_{l+1}, \partial_{y} g_{l+1},  g_{l+1})(t,  y^{\ast}_{l}), 
~~~ 
l \in \{1, \ldots, q-1\}, ~
y^{\ast}_{l}= 2\{\frac{l}{2}\}, 
\\ 
g_{q}(t, 2 \big\{ \frac{q}{2}\big\})
= h_{q}(t, 2 \big\{ \frac{q}{2}\big\}), 
\\ 
\partial_{y}^2 g_{q}(t, 2 \big\{ \frac{q}{2}\big\})
= b_1( \partial_y g_q, g_q)(t, 2 \big\{ \frac{q}{2}\big\}) 
~~~ \text{ or } ~~~ 
\partial_{y}g_{q}(t,2 \big\{ \frac{q}{2}\big\})=(-1)^{q-1}\varphi^{\prime}_q 
~~~ \text{(a constant)},  
\\
\partial_{y}^{3} g_{q}(t, 2 \big\{ \frac{q}{2}\big\})
+\frac{1}{\sigma^2}\partial_{y} h_{q}(t, 2 \big\{ \frac{q}{2}\big\})=b_2(\partial_y^2 g_q, \partial_y g_q, g_q)(t, 2 \big\{ \frac{q}{2}\big\}), 
\end{cases}
\end{equation} 
for all $t \in [0, T]$, where $b_1(\partial_y g_l, g_l)$ and $b_2(\partial_y^2 g_l, \partial_y g_l, g_l)$ are defined in \eqref{def:b_1} and \eqref{def:b_2}, respectively.

\subsection{The Short-Time Existence }
\label{sec:STE}

We introduce the spaces of functions used in the application of Solonnikov’s theory and 
the Banach fixed-point argument.
For $T>0$, define 
\[
Z^{T}
:=
\bigl(C^{\frac{4+\alpha_1}{4},\,4+\alpha_1}([0,T]\times[0,1])\bigr)^{q}
\times
\bigl(C^{\frac{2+\alpha_2}{2},\,2+\alpha_2}([0,T]\times[0,1])\bigr)^{q},
\]
endowed with the norm
\[
\|(g,h)\|_{Z^{T}}
=
\sum_{l=1}^{q}\|g_l\|_{C^{\frac{4+\alpha_1}{4},\,4+\alpha_1}}
+
\sum_{l=1}^{q}\|h_l\|_{C^{\frac{2+\alpha_2}{2},\,2+\alpha_2}}.
\]
Let $(g_0,h_0)$ denote the initial data associated with
$(U_0,V_{1,0},\ldots,V_{q,0})\in\Theta_{\mathcal P}$
via the parametrizations \eqref{eq:parametrization-gamma}
and \eqref{eq:parametrization-chi}.
We define 
\[
X^{T}_{(g_0,h_0)}
:=
\bigl\{(g,h)\in Z^{T}:
g_l(0,\cdot)=g_{l,0},\;
h_l(0,\cdot)=h_{l,0},
\; l=1,\ldots,q
\bigr\}, 
\]
which is a closed convex subset of $Z^{T}$.
For $K>0$, set 
\[
B_K
:=
\bigl\{(g,h)\in X^{T}_{(g_0,h_0)}: 
\|(g,h)\|_{Z^{T}}\le K
\bigr\}.
\]

Due to the different orders of parabolicity of $g_l$ and $h_l$ in \eqref{eq:nonlinear-g-h_l}, we decompose the linearized problem into two subsystems, to each of which Solonnikov's theory is then applied separately, as described below: 
\begin{equation}
\label{eq:higher-order-linear-g_l}
\begin{cases}
\partial_t g_{l}+\partial_y^{4} g_{l}
=G(\partial_y^{3} \bar{g}_{l}, \partial_y^2 \bar{g}_{l}, \partial_y \bar{g}_{l}, \bar{g}_{l}), 
~~~~~~~~~~~~~~~~~~~~~~~~~~~~~~~~~~~~~~~~~~~~~ 
l \in\{1, \ldots, q\}, 
\\ 
g_{l}(0,y)= g_{l,0}(y), 
~~~~~~~~~~~~~~~~~~~~~~~~~~~~~~~~~~~~~~~~~~~~~~~~~~~~~~~~~~~~~~~~~~~~ 
l\in \{1, \ldots, q\}, 
\\ 
g_{1}(t,0)= p_{0},  
\\ 
\partial_{y}g_{1}(t,0)=\varphi^{\prime}_0 
~~ \text{(a constant)}, 
\\ 
g_{l}(t, y^{\ast}_{l})
=g_{l+1}(t, y^{\ast}_{l}), 
~~~~~~~~~~~~~~~~~~~~~~~~~~~~~~~~~~~~~~~~~~~
l \in \{1, \ldots, q-1\}, ~
 y^{\ast}_l= 2\{\frac{l}{2}\},   
\\ 
\partial_{y}^{\mu-1}g_{l}(t,  y^{\ast}_{l})
+ (-1)^{\mu-1} \partial_{y}^{\mu} g_{l+1}(t, y^{\ast}_{l})=0, 
~~~
\mu \in \{1, 2\}, 
~ l \in \{1, \ldots, q-1\}, 
~ y^{\ast}_{l}= 2\{\frac{l}{2}\},   
\\ 
\partial_{y}^{3} g_{l}(t, y^{\ast}_{l})
+\partial_{y}^{3} g_{l+1}(t, y^{\ast}_{l})=\frac{1}{\sigma^2}\partial_{y} \bar{h}_{l}(t, y^{\ast}_l)+b_2(\partial_{y}^2 \bar{g}_l, \partial_{y}\bar{g}_l, \bar{g}_l)(t, y^{\ast}_{l}) 
\\
~~~~~~~~~~~~~~~~~~~~~~~~~~~~~~~ 
-b_2(\partial_{y}^2 \bar{g}_{l+1}, \partial_{y}\bar{g}_{l+1}, \bar{g}_{l+1})(t,y^{\ast}_{l}), ~ 
l \in \{1, \ldots, q-1\}, 
~ y^{\ast}_{l}= 2\{\frac{l}{2}\},  
\\ 
\partial_{y}^2g_{q}(t,2 \big\{ \frac{q}{2}\big\})= b_1 (\partial_y \bar{g}_q, \bar{g}_q)(t,2 \big\{ \frac{q}{2}\big\}) 
~~~ \text{ or } ~~~ 
\partial_{y}g_{q}(t,2 \big\{ \frac{q}{2}\big\})=(-1)^{q-1}\varphi^{\prime}_q ~~ \text{(a constant)}, 
\\
\partial_{y}^{3} g_{q}(t, 2 \big\{ \frac{q}{2}\big\})
=- \frac{1}{\sigma^2}\partial_{y} \bar{h}_{q}(t, 2 \big\{ \frac{q}{2}\big\})+b_2(\partial_{y}^2 \bar{g}_q, \partial_{y}\bar{g}_q, \bar{g}_q)(t, 2 \big\{ \frac{q}{2}\big\}), 
\end{cases}
\end{equation} 
and 
\begin{equation}
\label{eq:second-order-linear-h_l}
\begin{cases}
\partial_t h_{l}= \sigma^2 \partial_y^{2} h_{l} 
+ \sigma^2 b_1(\partial_y \bar{h}_l, \bar{h}_l), 
\\ 
h_{l}(0,y)= h_{l,0}(y), 
\\
h_{l}(t, y^{\ast}_l)= p_{l}, 
~~~~~~~~~~~~~~~~~~~~~~~~~~~~~~~~~~~~~~~~~~~~~~~~~~~~~~~~~~~~~~~~~~ 
y^{\ast}_{l}= 2\{\frac{l}{2}\},   
\\
 h_{l}(t, y^{\ast}_l)=g_{l}(t, y^{\ast}_{l}),
~~~~~~~~~~~~~~~~~~~~~~~~~~~~~~~~~~~~~~~~~~~~~~~~~~~~~~~~~~~ 
y^{\ast}_{l}= 2\{\frac{l}{2}\}, 
\end{cases}
\end{equation} 
where 
$g_l$ in \eqref{eq:second-order-linear-h_l} is a solution to \eqref{eq:higher-order-linear-g_l}, 
$\forall\, l \in \{1, \ldots, q\}$, $t\in[0,T]$, $y\in[0,1]$, $\{\frac{b}{a}\}:=\frac{b}{a}-[\frac{b}{a}]\in[0,1)$, $(\bar{g}, \bar{h}) \in X^{T}_{(g_0,h_0)}$.


\begin{theorem}[Solutions to the coupled linear parabolic systems, \eqref{eq:higher-order-linear-g_l}$\sim$\eqref{eq:second-order-linear-h_l}]
\label{thm:STE_for_linear-fitting} 
Let $\sigma \in (0,\infty)$ and $\alpha_j\in(0,1)$, $j \in \{1, 2\}$ with $\alpha_1>2\alpha_2$. 
Suppose that $(g_0,h_0)$ is the initial datum for the nonlinear parabolic system \eqref{eq:nonlinear-g-h_l}, where  
$g_0=(g_{1,0},\cdots,g_{q,0})$,   
$h_0=(h_{1,0}, \ldots, h_{q,0})$, 
and each $g_{l,0}$ and $h_{l,0}: [0,1] \to \mathbb{R}^n$ satisfy the regularity conditions $g_{l,0}\in C^{4+\alpha_1}([0,1])$ and $h_{l,0} \in C^{2+\alpha_2}([0,1])$ for all $l$. 
Additionally, assume that $(g_0, h_0)$ satisfies the compatibility conditions of order $0$, in the sense that the function $(U_0, V_{1,0}, \ldots, V_{q,0})$  given by \eqref{eq:parametrization-gamma} and \eqref{eq:parametrization-chi} satisfies Definition \ref{def:compatibility_cond_order_zero}. 
Then, for any $T\in(0,1)$ and $(\bar{g}, \bar{h}) \in X^{T}_{(g_0,h_0)}$, there exists a unique solution $(g, h)\in X^{T}_{(g_0,h_0)}$ 
to the linear parabolic systems, \eqref{eq:higher-order-linear-g_l}$\sim$\eqref{eq:second-order-linear-h_l}, 
such that 
$g_{l}\in C^{\frac{4+\alpha_1}{4}, 4+\alpha_1}([0,T]\times [0,1])$, 
$h_{l} \in C^{\frac{2+\alpha_2}{2}, 2+\alpha_2}([0,T]\times [0,1])$, 
$\forall\, l$, and  
\begin{align}\label{estimate:(g,h)}
\nonumber
&\norm{(g, h)}_{ X^{T}_{(g_0, h_0)} } 
\leq  C_0\cdot \bigg{(} \sum^{q}_{l=1}
\norm{G(\partial_y^{3} \bar{g}_{l},
\ldots, \bar{g}_{l})}_{C^{\frac{\alpha_1}{4},\alpha_1}([0,T] \times [0,1])} 
\\ \nonumber
& ~~~~~~~~~~~~~~~~  
+\sigma^2 \sum^{q}_{l=1}\norm{ b_1(\partial_y \bar{h}_l, \bar{h}_l)}_{C^{\frac{\alpha_2}{2},\alpha_2}([0,T] \times [0,1])}
+ \frac{1}{\sigma^2} 
\sum_{\substack{l=1, \ldots, q 
\\  
y^{\ast}_l =2 \big\{ \frac{l}{2} \big\} }} 
\|\partial_y \bar{h}_l (\cdot, y^{\ast}_l)\|_{C^{\frac{1+\alpha_1}{4}}([0,T])}    
\\ \nonumber
&~~~~~~~~~~~~~~~~~~~~~~~~ +\sum_{\substack{(l,y^{\ast}) \in \{1, \ldots, q \} \times \{0,1\} \backslash \{(1,0)\}}} \| b_2(\partial_{y}^2\bar{g}_l, \partial_{y}\bar{g}_l, \bar{g}_l)(\cdot, y^{\ast})\|_{C^{\frac{1+\alpha_1}{4}}([0,T])}
\\ \nonumber
&~~~~~~~~~~~~~~~~~~~~~~~~
+|\varphi^{\prime}_0|+\sum^{q}_{l=0}| p_{l}| 
+\sum^{q}_{l=1}\Vert g_{l,0} \Vert_{C^{4+\alpha_1}([0,1])}+\sum^{q}_{l=1}\Vert h_{l,0} \Vert_{C^{2+\alpha_2}([0,1])} \\
&~~~~~~~~~~~~~~~~~~~~~~~~ + \text{either } 
\big{\|} b_1 (\partial_{y} \bar{g}_q, \bar{g}_q) \big(\cdot, 2 \big\{ \frac{q}{2} \big\} \big) \big{\|}_{C^{\frac{2+\alpha_1}{4}}([0,T])} ~~ \text{ or } ~~ |\varphi^{\prime}_q| \bigg{ )}
\end{align} 
where $C_0>0$ is independent of the non-homogeneous terms and the initial-boundary datum in the coupled linear parabolic system, 
\eqref{eq:higher-order-linear-g_l}$\sim$\eqref{eq:second-order-linear-h_l}. 
\end{theorem}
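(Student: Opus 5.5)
The plan is to solve the two linear subsystems in sequence and treat each with Solonnikov's theory for linear parabolic boundary value problems (in the form used in \cite{LT26}).

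\textbf{Step 1: the fourth-order subsystem.} With $(\bar g,\bar h)\in X^T_{(g_0,h_0)}$ fixed, every term on the right-hand side of \eqref{eq:higher-order-linear-g_l} is a known function. This holds for the interior source $G(\partial_y^3\bar g_l,\ldots,\bar g_l)$, the transmission data built from $b_2(\cdot)$ and $\partial_y\bar h_l$, and the terminal datum $b_1(\partial_y\bar g_q,\bar g_q)$ or $\varphi'_q$. So \eqref{eq:higher-order-linear-g_l} is a decoupled-from-$h$ linear system for the vector $g=(g_1,\ldots,g_q)$ on $[0,T]\times[0,1]$.

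\textbf{Step 2: the second-order subsystem.} Once $g$ is found, the boundary datum $h_l(t,\cdot)=g_l(t,\cdot)$ at the free end of $V_l$ is known. Then \eqref{eq:second-order-linear-h_l} is a Dirichlet heat problem for each $h_l$ separately.

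\textbf{Regularity of the data.} First I check that all data lie in the right Hölder classes.
\begin{itemize}
\item $G(\ldots\bar g_l)$ involves at most $\partial_y^3\bar g_l$, so it lies in $C^{\alpha_1/4,\alpha_1}$.
\item $b_2(\partial_y^2\bar g,\ldots)$ restricted to $y^*$ lies in $C^{(1+\alpha_1)/4}([0,T])$, which is the trace class required for a third-order boundary operator.
\item $b_1(\partial_y\bar g_q,\bar g_q)$ lies in $C^{(2+\alpha_1)/4}$, as required for a second-order boundary operator.
\item $b_1(\partial_y\bar h_l,\bar h_l)\in C^{\alpha_2/2,\alpha_2}$.
\item $\partial_y\bar h_l(\cdot,y^*)\in C^{(1+\alpha_2)/2}$. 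This embeds in $C^{(1+\alpha_1)/4}$ precisely because $\alpha_1>2\alpha_2$ forces $(1+\alpha_1)/4<(1+\alpha_2)/2$ (indeed $1+\alpha_1<2+2\alpha_2$ holds).
\item For the heat problem, the Dirichlet datum $g_l(\cdot,y^*)\in C^{(4+\alpha_1)/4}\subset C^{(2+\alpha_2)/2}$.
\end{itemize}
The compatibility conditions of order $0$ from Definition \ref{def:compatibility_cond_order_zero}, translated through \eqref{eq:parametrization-gamma}–\eqref{eq:parametrization-chi}, supply exactly the zeroth-order compatibility between the initial data and the boundary data that Solonnikov's theorem requires. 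The time derivative at $t=0$ on the boundary is given by the equation in each case.

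\textbf{Main obstacle: the complementary (Lopatinskii--Shapiro) condition for the fourth-order system.} I would verify this in the standard way. Freeze coefficients, take the principal parts $\partial_t+\partial_y^4$ on each $g_l$, and use the reparametrization so that every boundary point becomes a half-line problem at $y=0$. Since interior junctions couple the pair $(g_l,g_{l+1})$, each junction is treated as a system of two half-line problems. For $\operatorname{Re}\lambda\ge0$, $\lambda\neq0$, the decaying solutions of $\lambda w+w''''=0$ on $[0,\infty)$ are spanned by $e^{-\rho_1 y}$ and $e^{-\rho_2 y}$, where $\rho_j^4=\lambda$ and $\operatorname{Re}\rho_j>0$. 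One must show that the homogeneous boundary operators force $w\equiv0$.
\begin{itemize}
\item \emph{Interior junction.} Eight unknown coefficients (vector-valued but acting componentwise) are constrained by four conditions: $g_l=g_{l+1}$, the first-derivative condition, the second-derivative condition, and the third-order sum condition. The principal part of the last is $\partial_y^3 g_l+\partial_y^3 g_{l+1}$, because the $\partial_y h_l$ term is of lower parabolic weight once split. Together with the two decay requirements per component this gives a $4\times4$ determinant in $\rho_1,\rho_2$. I would show it is nonzero by the same energy/algebraic argument as in \cite{LT26}: multiply by $\bar w$, integrate over both half-lines, and use the boundary relations to cancel the boundary terms. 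This yields $\lambda\|w\|^2+\|w''\|^2=0$, hence $w=0$.
\item \emph{Left end.} The conditions $w(0)=0$, $w'(0)=0$ are clamped, which is classical.
\item \emph{Right end, the new case.} The conditions are $w''(0)=0$, $w'''(0)=0$ (natural/Navier type) or $w'(0)=0$, $w'''(0)=0$. For the first, the same integration by parts kills both boundary terms $w'''\bar w$ and $w''\bar w'$. For the second, $w'\cdot\overline{w''}$ and $w'''\bar w$ vanish. In both cases $\lambda\int|w|^2+\int|w''|^2=0$ follows.
\end{itemize}
If the energy argument proves awkward for the coupled junction, I would fall back on computing the $2\times2$ determinants explicitly. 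With $\rho_2=i\rho_1$ they reduce to nonvanishing expressions such as $\rho_1\rho_2(\rho_1+\rho_2)$, which are nonzero for $\operatorname{Re}\rho_j>0$.

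\textbf{Conclusion.} Solonnikov's theorem then gives a unique $g_l\in C^{\frac{4+\alpha_1}{4},4+\alpha_1}$, with the norm bounded by the sum of the data norms. Classical Schauder theory for the heat equation gives a unique $h_l\in C^{\frac{2+\alpha_2}{2},2+\alpha_2}$, bounded by its data plus $\|g_l(\cdot,y^*)\|$, which is already controlled. Combining the two bounds gives \eqref{estimate:(g,h)} with $C_0$ independent of the data. The constant can be taken uniform for $T\in(0,1)$ by the usual extension-in-time argument, since the constant in Solonnikov's estimate is nondecreasing in $T$.
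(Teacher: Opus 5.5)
Your proposal is correct, and its overall architecture matches the paper's: freeze $(\bar g,\bar h)$, solve the fourth-order system for $g$ by Solonnikov's theorem, then solve the Dirichlet heat problems for $h_l$, and reduce the new work to the complementing condition at the changed boundary blocks.

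The genuine difference is how you verify the complementing condition.

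\emph{The paper's route.} It stays in Solonnikov's algebraic formulation. It shows that the rows of $\mathcal{B}\hat{\mathcal{L}}_0$ are independent modulo $M^+$ by reducing each new block to an explicit $2\times 2$ determinant in the roots $\xi_1,\xi_2$. This is done for $B_0=(1,\partial_y)$ and for both versions of $B_2$. For the junction block $B_1$ and for the $h$-problem it simply cites \cite{LT26}.

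\emph{Your route.} You pass to the equivalent half-line ODE formulation and use the single identity
$\lambda\|w\|^2+\|w''\|^2-w'''(0)\overline{w(0)}+w''(0)\overline{w'(0)}=0$.
Its boundary terms vanish for the clamped end and for both terminal-end variants. At a junction, the transmission conditions make the two contributions cancel pairwise.

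\emph{Comparison.} Your argument is more uniform and self-contained, since it covers $B_1$ without appealing to \cite{LT26}. It also explains why these boundary conditions are admissible: they are exactly the ones that kill the boundary terms in the same integration by parts that yields the energy identity of the flow. The paper's computation has the advantage of checking Solonnikov's hypothesis verbatim. Yours relies on the standard equivalence between independence modulo $M^+$ and uniqueness of decaying solutions of the frozen half-line problem.

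Three small slips, none affecting the argument:
\begin{enumerate}
\item For decaying modes $e^{-\rho y}$ of $\lambda w+w''''=0$ you need $\rho^4=-\lambda$, not $\rho^4=\lambda$. For $\lambda>0$, the latter has only one root with positive real part.
\item The embedding $C^{\frac{1+\alpha_2}{2}}\subset C^{\frac{1+\alpha_1}{4}}$ holds for all $\alpha_1,\alpha_2\in(0,1)$, so it does not use $\alpha_1>2\alpha_2$. That hypothesis is really needed where your last regularity bullet uses it: $C^{\frac{4+\alpha_1}{4}}\subset C^{\frac{2+\alpha_2}{2}}$ for the Dirichlet datum $g_l(\cdot,y^\ast)$ of the heat problem.
\item The condition $w''=w'''=0$ is the free-end condition, not the Navier condition.
\end{enumerate}
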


\begin{proof}[ The Proof of Theorem \ref{thm:STE_for_linear-fitting}]

The proof reduces to verifying that the linear parabolic systems \eqref{eq:higher-order-linear-g_l} and \eqref{eq:second-order-linear-h_l} satisfy the hypotheses of Solonnikov's theory 
\cite[Theorem 4.9, p. 121]{Solonnikov65}; see also \cite[Theorem 5.20]{LT26}. 
These conditions include parabolicity, smoothness of the coefficients, compatibility conditions, and the complementary conditions along the parabolic boundary. 
Within this framework, 
the estimate \eqref{estimate:(g,h)} is obtained by applying \cite[Theorem 4.9, p. 121]{Solonnikov65} with a suitable choice of data
for the linear problems associated with $g_l$ and $h_l$. Since the existence of solutions to 
\eqref{eq:second-order-linear-h_l} is established in \cite[Lemma 3.6]{LT26}, we omit the proof, and consequently only focus on 
establishing the existence of solutions to \eqref{eq:higher-order-linear-g_l}.

The present parabolic system differs from \emph{(2.19)$\sim$(2.26)} in \cite{LT26} (with $k=2$) only in the boundary conditions involving $g_1(t,0)$ and $g_q\!\left(t,2\bigl\{\frac{q}{2}\bigr\}\right)$. All remaining sufficient conditions for applying Solonnikov's theory, namely parabolicity, smoothness of the coefficients, and compatibility conditions, follow verbatim from \cite{LT26}. It therefore suffices to verify the complementing condition at the boundary for the parabolic system in \eqref{eq:higher-order-linear-g_l}.

We observe that the principal part of the parabolic system in \eqref{eq:higher-order-linear-g_l} is diagonal, coinciding with the operator $\partial_t+\partial_y^{4}$ acting on each component of $g$. Consequently, in the notation of Solonnikov in \cite{Solonnikov65}, we have $\mathcal{L}_0=\mathcal{L}$, and the associated characteristic polynomial is
\[
L(y,t,\mathrm{i}\xi,p)
=\det\mathcal{L}_0(y,t,\mathrm{i}\xi,p)
=(p+\xi^{4})^{mq}, ~~~~~~m=2N^2.
\]
Consequently,
\[
\widehat{\mathcal{L}}_0(y,t,\mathrm{i}\xi,p)
:= L\,\mathcal{L}_0^{-1}
= \mathrm{diag}\big(\widehat{\mathcal{L}}^{0}_{11},
\ldots,\widehat{\mathcal{L}}^{0}_{mq\,mq}\big),
\qquad
\widehat{\mathcal{L}}^{0}_{\ell\ell}
=(p+\xi^{4})^{mq-1},
\ \ell=1,\ldots,mq.
\]

\smallskip
\noindent\textbf{$\bullet$ The polynomial $M^{+}$.}

For $\operatorname{Re} p \ge 0$ with $p \neq 0$, the equation $p+(\mathrm{i}\xi)^{4}=0$ admits four roots, exactly two of which have positive real parts. We denote these roots by $\xi_1(y^\ast,p)$ and $\xi_2(y^\ast,p)$, and define the polynomial $M^{+}$ by
\[
M^{+}(y^\ast,\xi,p)
=(\xi-\xi_1(y^\ast,p))^{mq}(\xi-\xi_2(y^\ast,p))^{mq},
\]
in accordance with Solonnikov’s complementary condition. 

\smallskip 

{\bf $\bullet$ The complementary conditions along the boundaries $[0,T]\times \partial{I}$.} 

\smallskip 

The boundary conditions in \eqref{eq:higher-order-linear-g_l} can be rewritten as 
\begin{align*}
\mathcal{B}(y^{\ast}, \partial_{y}) 
g^{T}(t, y^{\ast})=\Phi^{\bar{g}, \bar{h}}(t, y^{\ast}), 
~~~y^{\ast} \in \{0,1\}. 
\end{align*} 
Here, the matrix $\mathcal{B}(y^{\ast}, \partial_{y})$ represents the simplified notation for the boundary operator 
$\mathcal{B}_0(y^{\ast}, t, \partial_{y}, \partial_{t})$ in Solonnikov's theory when $\mathcal{B}_0(y^{\ast}, t, \partial_{y}, \partial_{t})=\mathcal{B}_0(y^{\ast}, \partial_{y})$ is independent of $t$ and $\partial_t$, 
and is defined by 
\begin{align*}
\begin{cases}
(i) \text{ when $q$ is even:} 
\\ 
\mathcal{B}(y^\ast=0, \partial_{y})=
\text{diag}(B_0 , \overbrace{B_1, \ldots, B_1}^{\frac{q}{2}-1}, B_2), 
~~~
\mathcal{B}(y^\ast=1, \partial_{y})=\text{diag}(\overbrace{B_1, \ldots, B_1}^{\frac{q}{2}-1}, B_2 ), 
\\ 
\\ 
(ii) \text{ when $q$ is odd:}  
\\
\mathcal{B}(y^{\ast}=0, \partial_{y})=\text{diag}(B_0, \overbrace{B_1, \ldots, B_1}^{\frac{q-3}{2}}, B_2),
 ~~~~ 
\mathcal{B}( y^{\ast}=1, \partial_{y})=\text{diag}(\overbrace{B_1, \ldots, B_1}^{\frac{q-3}{2}}, B_2), 
\end{cases}
\end{align*}  
where   
\begin{align*}  
B_0 &:= 
\begin{pmatrix}
\mathrm{Id}_{m\times m}
\\ 
\partial_{y}\cdot \mathrm{Id}_{m\times m}
\\ 
\end{pmatrix}, 
~~~~~~~~~~~~~~~~~
B_1 := 
\begin{pmatrix} 
\mathrm{Id}_{m\times m} &- \mathrm{Id}_{m\times m}
\\ 
\partial_{y}\cdot \mathrm{Id}_{m\times m}&  \partial_{y} \cdot \mathrm{Id}_{m\times m}
\\ 
\partial_{y}^{2}\cdot \mathrm{Id}_{m\times m} & -\partial_{y}^{2} \cdot \mathrm{Id}_{m\times m}
\\ 
\partial_{y}^{3}\cdot \mathrm{Id}_{m\times m}& \partial_{y}^{3} \cdot \mathrm{Id}_{m\times m}
\\ 
\end{pmatrix}, 
\\ 
B_2 &:=
\begin{cases}
\begin{pmatrix}
\partial_{y} \cdot \mathrm{Id}_{m\times m}
\\ 
\partial_{y}^{3}\cdot \mathrm{Id}_{m\times m} 
\\ 
\end{pmatrix} ~~\text{ as the boundary condition is ~ } \partial_x g_q(t, 2\big\{ \frac{q}{2}\big\})=0, 
\\
\begin{pmatrix}
\partial_{y}^2 \cdot \mathrm{Id}_{m\times m}
\\ 
\partial_{y}^{3}\cdot \mathrm{Id}_{m\times m}
\\ 
\end{pmatrix} ~~\text{ as the boundary condition is ~ } \partial_y^2 g_q(t, 2\big\{ \frac{q}{2}\big\})=b_1(\partial_y \bar{g}_q, \bar{g}_q)(t, 2\big\{ \frac{q}{2}\big\}).
\end{cases}
\end{align*} 
Hence, each $\mathcal{B}(y^{\ast}, \partial_{y})$ is a diagonal block matrix. 
The term $\Phi^{\bar g,\bar h}$, defined on the right-hand side of \eqref{eq:higher-order-linear-g_l}, plays no role in the verification of the complementary conditions, and is only relevant in the a priori estimate \eqref{estimate:(g,h)}. 
The definition of $\Phi^{\bar g,\bar h}$ is analogous to that of
$\Phi^{\bar h}$ in \cite[(3.32)$\sim$(3.33)]{LT26}, and we therefore omit the details and refer the readers to \cite{LT26}.

To verify the complementary conditions of Solonnikov's theory, it suffices to show that the rows of 
\[
\mathcal{A}(y^\ast,t,\mathrm{i}\xi,p)
=\mathcal{B}(y^\ast,\mathrm{i}\xi)\,\hat{\mathcal{L}}_0(y^\ast,t,\mathrm{i}\xi,p)
\] 
are linearly independent modulo $M^{+}(y^\ast,\xi,p)$ for $\text{Re }p\ge0$, $p\neq0$. 
Since $\mathcal{B}$ is a diagonal block matrix and $\hat{\mathcal{L}}_0$ is diagonal, $\mathcal{A}$ is a diagonal block matrix. 
Hence, the verification of the complementary conditions on each block matrix is sufficient to ensure the complementary conditions of the matrix $\mathcal{A}$. 
More precisely, it is sufficient to verify that the rows of 
\[
\mathcal{A}_0
:= B_0  \widehat{\mathcal{L}}_{11}^{0}\, \mathrm{Id}_{m\times m},
\qquad
\mathcal{A}_1
:= B_1 \widehat{\mathcal{L}}_{11}^{0}  \begin{pmatrix}
\widehat{\mathcal{L}}_{11}^{0}\, \mathrm{Id}_{m\times m} \\
\widehat{\mathcal{L}}_{11}^{0}\, \mathrm{Id}_{m\times m}
\end{pmatrix},
\qquad
\mathcal{A}_2
:= B_2  \widehat{\mathcal{L}}_{11}^{0}\, \mathrm{Id}_{m\times m}
\] 
are linear independent modulo $M^{+}(y^\ast,\xi,p)$ 
for \(\mathrm{Re}\,p \ge 0\) and \(p \neq 0\).

Since the verification of linear independence for the matrix $\mathcal{A}_1$ has already been established in the case by letting $k=2$ in \cite{LT26}, it remains to verify those of the matrices $\mathcal{A}_2$ and $\mathcal{A}_0$, the cases associated with $B_2$ and $B_0$, respectively below.

\begin{description}

\item[Case $1^\circ$] When the matrix $B_2$ corresponds to the boundary condition $\partial_y g_q(t,0)$=0, we have   
\[
\mathcal{A}_2
:= B_2 \hat{\mathcal{L}}_{11}^0 \, \mathrm{Id}_{m\times m} 
=
\begin{pmatrix}
\hat{\mathcal{L}}_{11}^0 \, \mathrm{i}\xi \, \mathrm{Id}_{m\times m} 
\\[1mm]
\hat{\mathcal{L}}_{11}^0 \, (\mathrm{i}\xi)^3 \, \mathrm{Id}_{m\times m}
\end{pmatrix},
\]
evaluated at \(y^\ast=0\). The case corresponding to the boundary condition at $y^\ast=1$ is proved analogously.

To prove the linear independence of rows modulo \(M^{+}(0,\xi,p)\), we must show that
\begin{equation}\label{eq:comple-cond}
\omega\,\mathcal{A}_2 = 0 
\quad \text{mod } M^{+}(0,\xi,p)
\quad\Longrightarrow\quad \omega=0 .
\end{equation}
From a straightforward computation, 
the algebraic equation \eqref{eq:comple-cond} is equivalent to
\[
\bigl( (\mathrm{i}\xi) \, \omega_j + (\mathrm{i} \xi)^3 \omega_{m+j} \bigr)
(p+\xi^4)^{mq-1}=0
\quad \text{mod } M^{+}(0,\xi,p).
\] 
Dividing by the factors corresponding to the roots of \(M^+\) yields  the reduced condition
\begin{equation}\label{eq:reduced}
(\mathrm{i} \xi) \, \omega_j + (\mathrm{i}\xi)^3 \omega_{m+j}
=0
\quad \text{mod } (\xi-\xi_1)(\xi-\xi_2),
\end{equation}
since the remaining polynomial factor is co-prime to 
\((\xi-\xi_1)(\xi-\xi_2)\). 

Substituting $\xi=\xi_1$ and $\xi=\xi_2$ into \eqref{eq:reduced} yields the linear system
\[
(\omega_j,\omega_{m+j}) C_1:=
(\omega_j,\omega_{m+j})
\begin{pmatrix}
 \mathrm{i} \xi_1  & \mathrm{i} \xi_2 \\[1mm]
(\mathrm{i} \xi_1)^3  & (\mathrm{i} \xi_2)^3
\end{pmatrix}
=0.
\]
Since  
\[
\det C_1
= \mathrm{i} \,\xi_1  \xi_2  \bigl( \xi_2^{2} - \xi_1^{2} \bigr)
\neq 0, 
\]
it follows that the matrix \(C_1\) is invertible, and 
$(\omega_j,\omega_{m+j})=(0,0)$, for all $j$.
Therefore, we conclude $\omega=0$. 
This proves the linear independence for the rows of 
$\mathcal{A}_2$ in the corresponding boundary conditions.

\item[Case $2^\circ$] 
When the matrix $B_2$ corresponds to the boundary conditions, 
\[
\partial_y^2 g_q\!\left(t,\,2\bigl\{\tfrac{q}{2}\bigr\}\right)
= b_1(\partial_y \bar g_q,\bar g_q)\!\left(t,\,2\bigl\{\tfrac{q}{2}\bigr\}\right), 
\] 
we have
\[
\mathcal{A}_2
:= B_2\, \hat{\mathcal{L}}_{11}^0\, \mathrm{Id}_{m\times m}=\begin{pmatrix}
\hat{\mathcal{L}}_{11}^0 (\mathrm{i}\xi)^2\, \mathrm{Id}_{m\times m} \\[1mm]
\hat{\mathcal{L}}_{11}^0 (\mathrm{i}\xi)^3\, \mathrm{Id}_{m\times m}
\end{pmatrix}
\]
evaluated at $y^\ast = 2\bigl\{\tfrac{q}{2}\bigr\}$.

To verify linear independence of the rows modulo $M^{+}(2\bigl\{\tfrac{q}{2}\bigr\},\xi,p)$,
we must prove
\begin{equation}\label{eq:comple-cond-2}
\omega\,\mathcal{A}_2 = 0
\quad \text{mod } M^{+}(2\bigl\{\tfrac{q}{2}\bigr\},\xi,p)
\quad\Longrightarrow\quad \omega=0 .
\end{equation}

From a straightforward computation, 
the algebraic equation \eqref{eq:comple-cond-2} is equivalent to
\[
\bigl( (\mathrm{i}\xi)^2 \omega_j + (\mathrm{i}\xi)^3 \omega_{m+j} \bigr)
(p+\xi^4)^{mq-1} = 0
\quad \text{mod } M^{+}(2\bigl\{\tfrac{q}{2}\bigr\},\xi,p).
\]
Dividing the factors corresponding to the roots of $M^+$ yields
\begin{equation}\label{eq:reduced-2}
(\mathrm{i}\xi)^2 \omega_j + (\mathrm{i}\xi)^3 \omega_{m+j}
=0
\quad \text{mod } (\xi-\xi_1)(\xi-\xi_2),
\end{equation}
since the remaining factor is co-prime to $(\xi-\xi_1)(\xi-\xi_2)$.

Substituting $\xi=\xi_1$ and $\xi=\xi_2$ into 
\eqref{eq:reduced-2} yields 
\[
(\omega_j,\omega_{m+j}) C_2 :=
(\omega_j,\omega_{m+j})
\begin{pmatrix}
(\mathrm{i}\xi_1)^2 & (\mathrm{i}\xi_2)^2 \\[1mm]
(\mathrm{i}\xi_1)^3 & (\mathrm{i}\xi_2)^3
\end{pmatrix}
=0.
\]
Since 
\[
\det C_2
= \mathrm{i}\,\xi_1^{2}\xi_2^{2}\bigl(\xi_2^{2}-\xi_1^{2}\bigr)\neq 0,
\]
it follows that the matrix $C_2$ is invertible. Hence $(\omega_j,\omega_{m+j})=(0,0)$ for all $j$, and thus $\omega=0$.
This proves the linear independence for the rows of  $\mathcal{A}_2$ in the corresponding boundary conditions.

\item[Case $3^\circ$] 
When the matrix $B_0$ corresponds to the boundary condition, 
\[
\partial_y g_1(t,0)=0, 
\]
we have 
\[
\mathcal{A}_0
:= B_0\, \hat{\mathcal{L}}_{11}^0\, \mathrm{Id}_{m\times m}
=
\begin{pmatrix}
\hat{\mathcal{L}}_{11}^0\, \mathrm{Id}_{m\times m} \\[1mm]
\hat{\mathcal{L}}_{11}^0\, (\mathrm{i}\xi)\, \mathrm{Id}_{m\times m}
\end{pmatrix}.
\]
To show that the rows are linearly independent modulo $M^{+}(0,\xi,p)$,
we must verify that
\begin{equation}\label{eq:comple-cond-3}
\omega\,\mathcal{A}_0 = 0
\quad \text{mod } M^{+}(0,\xi,p)
\quad \Longrightarrow \quad \omega=0 .
\end{equation}
From a straightforward computation, 
the algebraic equation \eqref{eq:comple-cond-3} is equivalent to
\[
\bigl( \omega_j + (\mathrm{i}\xi)\omega_{m+j} \bigr)
(p+\xi^4)^{mq-1}
=0
\quad \text{mod } M^{+}(0,\xi,p).
\]
Dividing the factors corresponding to the roots of $M^+$ yields
\begin{equation}\label{eq:reduced-3}
\omega_j + (\mathrm{i}\xi)\omega_{m+j}
=0
\quad \text{mod } (\xi-\xi_1)(\xi-\xi_2),
\end{equation}
since the remaining factor is co-prime to $(\xi-\xi_1)(\xi-\xi_2)$.

Substituting $\xi=\xi_1$ and $\xi=\xi_2$ into  \eqref{eq:reduced-3} at $\xi=\xi_1$ and $\xi=\xi_2$ yields 
\[
(\omega_j,\omega_{m+j})\, C_3 :=
(\omega_j,\omega_{m+j})
\begin{pmatrix}
1 & 1 \\[1mm]
\mathrm{i}\xi_1 & \mathrm{i}\xi_2
\end{pmatrix}
=0.
\]

Since 
\[
\det C_3
= \mathrm{i}(\xi_2 - \xi_1) \neq 0,
\]
it follows that the matrix $C_3$ is invertible. Therefore
\((\omega_j,\omega_{m+j})=(0,0)\) for all $j$, which implies $\omega=0$. 
This proves the linear independence for the rows of  $\mathcal{A}_0$ in the corresponding boundary conditions. 

\end{description}

The verification of the complementary conditions at the boundaries is now completed.

This verifies all conditions required for Solonnikov's theory, establishing the existence of solutions to \eqref{eq:higher-order-linear-g_l} and \eqref{eq:second-order-linear-h_l} and completing the proof; see \cite[Lemma 3.6]{LT26} for details. 

\end{proof}

\begin{theorem}\label{theorem-short-time}
Suppose that all the assumptions in Theorem \ref{thm:STE_for_linear-fitting} are satisfied.
Then, the following statements hold:   
\begin{itemize}
\item[(i)] 
There exists a positive number 
$t_0=t_0\left(N, \alpha_1, \alpha_2, \sigma, g_0, h_0\right)>0$ 
such that $(g, h)\in X^{t_0}_{(g_0,h_0)}$ 
is the unique solution to \eqref{eq:nonlinear-g-h_l}.
\item[(ii)]  
The solution $(g,h)$ satisfies the regularity 
\begin{equation*} 
\begin{cases}
g_l  \in C^{\frac{4+\alpha_1}{4}, 4+\alpha_1}\left([0,t_0] \times [0,1]\right)\bigcap 
C^{\infty}\left((0,t_0]\times [0,1]\right), 
~~~~~~~~~ l\in\{1, \ldots, q\}, 
\\
h_l  \in C^{\frac{2+\alpha_2}{2},2+\alpha_2}\left([0,t_0] \times [0,1]\right)\bigcap 
C^{\infty}\left((0,t_0]\times [0,1]\right), 
~~~~~~~~~ l\in\{1, \ldots, q\}.  
\end{cases}
\end{equation*} 
\item[(iii)] 
Moreover, the induced map $U: [x_0, x_q]\rightarrow \mathbf{SU}(N) \subset \mathbb{R}^{2N^2}$,
defined by \eqref{eq:parametrization-gamma} 
satisfies  
$U(t,\cdot) \in C^{2}([x_0, x_q]), ~ \forall\, t \in [0,t_0]$. 
\end{itemize}  
\end{theorem}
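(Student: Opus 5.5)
The plan is to prove (i) by a contraction argument on $B_K \subset X^{T}_{(g_0,h_0)}$, using the solution operator of the linear problems from Theorem \ref{thm:STE_for_linear-fitting}.

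\textbf{Setting up the map.} For $(\bar g,\bar h)\in B_K$, let $\mathcal{S}(\bar g,\bar h):=(g,h)$ be the unique solution of \eqref{eq:higher-order-linear-g_l}$\sim$\eqref{eq:second-order-linear-h_l}; this is well defined by Theorem \ref{thm:STE_for_linear-fitting}. Fixed points of $\mathcal{S}$ are exactly solutions of \eqref{eq:nonlinear-g-h_l}.

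\textbf{Self-map.} Choose $K$ larger than twice $C_0$ times the data terms in \eqref{estimate:(g,h)}, namely $|\varphi'_0|+\sum_l|p_l|+\|g_0\|+\|h_0\|+|\varphi'_q|$, plus the nonlinear terms evaluated at the time-independent extension $(g_0,h_0)$. The nonlinearities $G$, $b_1$, $b_2$ are polynomial in $(\partial_y^{j}\bar g,\bar g^\ast)$ and in $(\partial_y\bar h,\bar h,\bar h^\ast)$. Each term on the right of \eqref{estimate:(g,h)} involves derivatives strictly below the top order: at most three $y$-derivatives of $\bar g$, at most one of $\bar h$, and traces of second derivatives in $C^{(1+\alpha_1)/4}$ or $C^{(2+\alpha_1)/4}$. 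Since $(\bar g,\bar h)$ and $(g_0,h_0)$ agree at $t=0$, interpolation gives
\[
\|\partial_y^{j}(\bar g-g_0)\|_{C^{\frac{\alpha_1}{4},\alpha_1}}\le C(K)\,T^{\varepsilon}
\qquad\text{for } j\le 3,
\]
with $\varepsilon>0$, and similarly for $\bar h$. The trace term $\frac1{\sigma^2}\|\partial_y\bar h_l(\cdot,y^\ast_l)\|_{C^{(1+\alpha_1)/4}}$ mixes the two scales. Here the hypothesis $\alpha_1>2\alpha_2$ enters: $\partial_y\bar h$ has time regularity $\frac{1+\alpha_2}{2}$, which exceeds $\frac{1+\alpha_1}{4}$, so a positive power of $T$ survives. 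Hence $\mathcal{S}(B_K)\subset B_K$ for $T$ small.

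\textbf{Contraction.} Applying Theorem \ref{thm:STE_for_linear-fitting} to the difference of two images gives zero initial data. The difference satisfies the same linear system with right-hand sides equal to differences of the nonlinearities. Since these are locally Lipschitz (polynomial) on bounded sets, the same interpolation yields
\[
\|\mathcal{S}(a)-\mathcal{S}(b)\|_{Z^T}\le C(K)\,T^{\varepsilon}\,\|a-b\|_{Z^T}.
\]
Here the compatibility conditions of order $0$ (Definition \ref{def:compatibility_cond_order_zero}) guarantee that the difference problem has vanishing data at the corner $t=0$, $y\in\{0,1\}$. Banach's theorem then gives a unique fixed point on $[0,t_0]$. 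I expect this step to be the \textbf{main obstacle}: getting a positive power of $T$ uniformly for every term. This includes the $b_1$ term at the endpoint $2\{q/2\}$ in the nonstandard boundary condition and the coupling trace $\partial_y\bar h$. My first attempt is the argument of \cite{LT26}, adapted only at $y=0$ for $g_1$ and at the endpoint for $g_q$. Uniqueness in all of $X^{t_0}$, not just in $B_K$, follows by enlarging $K$ and shrinking the time.

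\textbf{Regularity and the constraint.} For (ii), bootstrap on $[\delta,t_0]$: multiply by smooth cutoffs in time, differentiate the equation, and reapply Solonnikov's theorem. This removes the need for higher compatibility conditions away from $t=0$ and yields $C^\infty$. For (iii), inside each $I_l$ the maps are $C^{4+\alpha_1}$. Across each knot, the boundary conditions $g_l=g_{l+1}$ and $\partial_y^{\mu-1}g_l+(-1)^{\mu-1}\partial_y^{\mu}g_{l+1}=0$ for $\mu=1,2$, undone through the reparametrization \eqref{eq:parametrization-gamma}, give continuity of $U$, $\partial_xU$ and $\partial_x^2U$. Hence $U(t,\cdot)\in C^2([x_0,x_q])$. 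Finally, one checks that the solution stays in $\mathbf{SU}(N)$. This follows by the argument of \cite{LT26}: the extrinsic system is equivalent to the intrinsic one, and $U_lU_l^\ast-\mathrm{Id}$ satisfies a linear parabolic system with zero data, so uniqueness forces it to vanish.
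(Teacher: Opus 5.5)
Your proposal is correct and follows essentially the paper's route: the linear solution operator from Theorem~\ref{thm:STE_for_linear-fitting} is shown to be a self-map and a strict contraction on $X^{T}_{(g_0,h_0)}\cap B_{K}$ for small $T$, by subtracting the initial data and extracting a factor $T^{\beta}$ via interpolation (the difference problem has zero initial data); the time-cut-off bootstrap of \cite{LT26} then gives (ii) and the junction conditions give (iii), while your extra remarks on uniqueness in all of $X^{t_0}_{(g_0,h_0)}$ and on invariance of $\mathbf{SU}(N)$ go beyond what the paper proves (the latter is not actually settled by the one-line appeal to $U_lU_l^{\ast}-\mathrm{Id}$). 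One correction: $\frac{1+\alpha_2}{2}>\frac{1+\alpha_1}{4}$ holds for all $\alpha_1,\alpha_2\in(0,1)$, so $\alpha_1>2\alpha_2$ is not what tames the trace term $\frac{1}{\sigma^2}\partial_y\bar h_l(\cdot,y^\ast_l)$; it is used inside the linear theorem, where the Dirichlet datum $h_l(t,y^\ast_l)=g_l(t,y^\ast_l)$ must be $C^{\frac{2+\alpha_2}{2}}$ in time while $g_l$ only supplies $C^{\frac{4+\alpha_1}{4}}$, forcing $\frac{\alpha_1}{4}\ge\frac{\alpha_2}{2}$.
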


\begin{remark}
The constant $t_0>0$ in 
Theorem~\ref{theorem-short-time} depends only on $m$, $\alpha_1$, $\alpha_2$, $\sigma$, and the certain norm of the initial data $(g_0,h_0)$. 
We refer to \cite[Remark~3.10]{LT26} for the detailed discussion of the dependence.
\end{remark}

\begin{proof}[The Proof of Theorem \ref{theorem-short-time}]

We focus on the case with the boundary condition 
\[
\partial_{y}^2 g_{q}(t, 2 \big\{ \frac{q}{2}\big\})= b_1( \partial_y g_q, g_q)(t, 2 \big\{ \frac{q}{2}\big\}),
\] 
for which we provide a detailed proof. 
The remaining case with boundary condition 
\[
\partial_{y}g_{q}(t,2 \big\{ \frac{q}{2}\big\})=(-1)^{q-1}\varphi^{\prime}_q
\] 
follows similarly and is, in fact, simpler, requiring only the replacement of 
\[
\big{\|} b_1 (\partial_{y} \bar{g}_q, \bar{g}_q) \big(\cdot, 2 \big\{ \frac{q}{2} \big\} \big) \big{\|}_{C^{\frac{2+\alpha_1}{4}}([0,T])}
\] 
with $|\varphi^{\prime}_q|$ in the estimates below. 

The proof is divided into several steps.

\begin{itemize}

\item[\bf (i).] {\bf Existence of Local Solutions}: 
The existence of solution to \eqref{eq:nonlinear-g-h_l} in the space $X^{t_0}_{(g_0,h_0)}$.

\bigskip 


{\bf $\bullet$ Self-maps, i.e., $\exists$ $t_1 >0$ so that 
$\mathcal{G}\left( X^{T}_{(g_0,h_0)} \cap B_{K_1}\right) \subset X^{T}_{(g_0,h_0)} \cap B_{K_1}$, 
$\forall\, T\in(0, t_1)$, and some $K_1\in(0,\infty)$.} 

Applying Theorem~\ref{thm:STE_for_linear-fitting} to the linear parabolic systems \eqref{eq:higher-order-linear-g_l} and \eqref{eq:second-order-linear-h_l}, we define the operator
\begin{align*} 
\nonumber
\mathcal{G}:  X^{T}_{(g_0,h_0)} \cap B_K 
&\to 
X^{T}_{(g_0,h_0)} \cap B_K 
\\ 
(\bar{g}, \bar{h})
&\mapsto  (g,h)
\text{,} 
\end{align*}
where $(g,h)$ denotes the unique solution of
\eqref{eq:higher-order-linear-g_l}$\sim$\eqref{eq:second-order-linear-h_l} for sufficiently small $T\in(0,1)$.  
We demonstrate that $\mathcal{G}$ acts as a self-map on $B_{K_1}$ for some $K_1\in(0,\infty)$, following the same argument as in \cite[Theorem~3.10]{LT26}. Specifically, interpolation inequalities are applied to the estimate \eqref{estimate:(g,h)}, where each term not involving the initial data $g_{l,0}$ or $h_{l,0}$  is rewritten by adding and subtracting these quantities, expressing it in terms of either 
$\bar g_l-g_{l,0}$ or $\bar h_l-h_{l,0}$. 
We then apply the triangle inequality together with the algebraic identity
\begin{equation}\label{identity-a^n-b^n}
a^p-b^p=(a-b)\bigl(a^{p-1}+a^{p-2}b+\cdots+ab^{p-2}+b^{p-1}\bigr),
\qquad p\in\{2,3,\ldots\},
\end{equation}
and the technical lemmas \cite[Remark~B.1, Lemma~B.2]{DLP21} and \cite[Lemma~5.10]{LT26},
with $v=\bar g_l-g_{l,0}$ or $v=\bar h_l-h_{l,0}$.

As a consequence, we obtain the estimate
\[
\|(g,h)\|_{X^{T}_{(g_0,h_0)}}
\le C_1\,T^{\beta}
+ C(N,\sigma,K), 
\] 
for some $\beta>0$. 
Choosing $T\in(0,1)$ sufficiently small ensures that $\mathcal{G}$ maps $X^{T}_{(g_0,h_0)}$ into itself, and hence $\mathcal{G}$ is a self-map. 
This follows by applying the triangle inequality \eqref{identity-a^n-b^n} together with the technical lemmas \cite[Remark~B.1, Lemma~B.2]{DLP21} and \cite[Lemma~5.10]{LT26}, with $v=\bar g_l-g_{l,0}$ or $v=\bar h_l-h_{l,0}$, yielding 
\begin{align*} 
&\|b_1(\partial_y\bar{h}_l, \bar{h}_l)- b_1(\partial_y h_{l,0}, h_{l,0})\|_{ C^{\frac{\alpha_2}{2}, \alpha_2} ([0,T] \times [0,1])} 
\leq C(N, K)\cdot 
T^{\beta_1}, 
\\ 
& \sum\limits^{q}_{l=1}\Vert G(\partial_y^{3} \bar{g}_{l}, \ldots, \bar{g}_{l})
-G(\partial_y^{3} g_{l,0}, \ldots, g_{l,0}) 
\Vert_{ C^{\frac{\alpha_1}{4}, \alpha_1}([0,T] \times [0,1])}  
\leq  C\left(N, K \right) \cdot T^{\beta_1} 
\text{,}
\\ 
&\| \partial_y\bar{h}_l (\cdot, y^{\ast}_l)- \partial_y h_{l,0} (y^{\ast}_l) \|_{C^{\frac{1+\alpha_1}{4} }([0,T])} 
\leq 4 K T^{\beta_3},
\\ \nonumber 
&\sum_{\substack{(l,y^{\ast}) \in \{1, \ldots, q \} \times \{0,1\} \backslash \{(1,0)\}}}\| b_2(\partial_y^2 \bar{g}_l, \partial_y \bar{g}_l,  \bar{g}_l)(\cdot, y^{\ast}) -b_2(\partial_y^2 g_{l,0}, \partial_y g_{l,0},  g_{l,0})(y^{\ast}) \|_{C^{\frac{1+\alpha_1}{4}}([0,T])} 
\\ 
& ~~~~~~~~~~~~~~~~~~~~~~~~~~~~~~~~~~~~~~~~~~~~~~~~~~~~~~ \leq C\left(N, K \right) \cdot T^{\beta_1}, 
\\ \nonumber 
&
\bigg{\|} b_1(\partial_y \bar{g}_q,  \bar{g}_q)\big(\cdot, 2 \big\{ \frac{q}{2} \big\} \big) -b_1(\partial_y g_{q,0},  g_{q,0})\big(2 \big\{ \frac{q}{2} \big\} \big) \bigg{\|}_{C^{\frac{2+\alpha_1}{4}}([0,T])}\leq  C\left(N, K \right)  \cdot T^{\beta_1}.
\end{align*}
where 
$\beta_1=\min \big{ \{ } \frac{\alpha_1}{4},\frac{1-\alpha_1}{4} \big{ \} }$, $\beta_2=\min \big{ \{ } \frac{\alpha_2}{2}, \frac{1-\alpha_2}{2} \big{ \} }$, and $\beta_3=\min \big{ \{ } \frac{\alpha_2}{2}, \frac{1+\alpha_2}{2}-\frac{1+\alpha_1}{4} \big{ \} }$. 
Note that the simplified notation $C(N,K)$ used above reflects the bounds  
\begin{equation*} 
\begin{cases}
\| g_{l,0}\|_{C^{4+\alpha}([0,1])} \leq \Vert g_l 
\Vert_{ C^{\frac{4+\alpha}{4}, 4+\alpha}([0,T] \times [0,1])}\le K, 
\\ 
\| h_{l,0}\|_{C^{4+\alpha}([0,1])} \leq \Vert h_l 
\Vert_{ C^{\frac{4+\alpha}{4}, 4+\alpha}([0,T] \times [0,1])}\le K.   
\end{cases}
\end{equation*}

By the inequality in Theorem \ref{thm:STE_for_linear-fitting} and the triangle inequality in H\"{o}lder spaces, we obtain 
\begin{align} 
\label{eq:|(g,h)|_{X^T}}
&\norm{(g,h)}_{X^{T}_{(g_0,h_0)}} 
\\  \nonumber 
&\leq  C_1 T^{\beta} 
+C_0 \cdot  
\bigg( 
\sum^{q}_{l=1}\norm{  G(\partial_y^{3} g_{l,0}, \ldots, g_{l,0})}_{ C^{\alpha_1} ([0,1]) } 
 + \sigma^2 \sum^{q}_{l=1}\norm{ b_1(\partial_y h_{l,0}, h_{l,0})}_{C^{\alpha_2}( [0,1])} 
\\ \nonumber 
& +\sum_{\substack{(l,y^{\ast}) \in \{1, \ldots, q \} \times \{0,1\} \backslash \{(1,0)\}}} |b_2(\partial_{y}^2g_{l,0}, \partial_{y}g_{l,0}, g_{l,0})( y^{\ast})| + \big(1+\frac{1}{\sigma^2} \big) \sum^{q}_{l=1}\Vert h_{l,0} \Vert_{ C^{2+\alpha_2} ([0,1])} \\ \nonumber 
&~~~~~~~~+ \big|b_1(\partial_y g_{q,0},  g_{q,0})\big(2 \big\{ \frac{q}{2} \big\} \big)  \big| 
+|\varphi^{\prime}_0|+ \sum\limits_{l=0}^{q} |p_{l}| 
+\sum^{q}_{l=1}
\Vert g_{l,0}\Vert_{C^{4+\alpha_1}([0,1])}  \bigg) 
\end{align} 
where  
$\beta:=\min\{\beta_1, \beta_2, \beta_3\}$, 
$T\in(0,1)$, $C_1 =C_1 \left(N, \sigma, K\right)$.  
By letting 
\begin{align*} 
& K=K_1:=2 C_0 \cdot  
\bigg( 
\sum^{q}_{l=1}\norm{G(\partial_y^{3} g_{l,0}, \ldots, g_{l,0})}_{ C^{\alpha_1} ([0,1]) } 
 + \sigma^2 \sum^{q}_{l=1}\norm{ b_1(\partial_y h_{l,0}, h_{l,0})}_{C^{\alpha_2}( [0,1])} 
\\ \nonumber 
& +\sum_{\substack{(l,y^{\ast}) \in \{1, \ldots, q \} \times \{0,1\} \backslash \{(1,0)\}}} |b_2(\partial_{y}^2g_{l,0}, \partial_{y}g_{l,0}, g_{l,0})(y^{\ast})|+ \big(1+\frac{1}{\sigma^2} \big) \sum^{q}_{l=1}\Vert h_{l,0} \Vert_{ C^{2+\alpha_2} ([0,1])}  
\\ \nonumber 
& ~~~~~~~~+  \big|b_1(\partial_y g_{q,0},  g_{q,0})\big(2 \big\{ \frac{q}{2} \big\} \big)  \big| 
+|\varphi^{\prime}_0| + \sum\limits_{l=0}^{q} |p_{l}| 
+\sum^{q}_{l=1}
\Vert g_{l,0}\Vert_{C^{4+\alpha_1}([0,1])} \bigg) 
\end{align*} 
above and by choosing $T=t_1>0$ in \eqref{eq:|(g,h)|_{X^T}}, where $t_1$ satisfies  
$ C_1 t_1^{\beta} \leq \frac{K_1}{2}$, 
we obtain  
$\norm{(g,h)}_{X^{t_1}_{(g_0,h_0)}}\leq K_1$.
Thus, we conclude 
\begin{align}
\label{eq:self_map}
\mathcal{G}\left(X^{T}_{(g_0,h_0)} 
\cap B_{K_1}\right) \subset X^{T}_{(g_0,h_0)} \cap B_{K_1}, 
~~~~~~\forall\, T \in (0,t_1]. 
\end{align}

\bigskip


{\bf $\bullet$ Contraction maps.}

We now establish that $\mathcal{G}$ is a contraction on $X^{T}_{(g_0,h_0)} \cap B_{K_1}$ for sufficiently small $T>0$. Specifically, there exists $t_0\in(0,t_1]$ such that for any $T\in(0,t_0]$ and any 
$$
(\bar{g}, \bar{h}), (\bar{e}, \bar{f})\in X^{T}_{(g_0,h_0)} \cap B_{K_1}, 
$$ 
the following estimate holds:
\begin{align}
\label{est:contraction-map-gamma+chi}
\norm{(g,h)-(e,f)}_{X^{T}_{(g_0,h_0)}} 
\leq C_{2} \, T^{\beta} \,  \norm{(\bar{g}, \bar{h})-(\bar{e}, \bar{f})}_{X^{T}_{(g_0,h_0)}} 
<\norm{(\bar{g}, \bar{h})-(\bar{e}, \bar{f})}_{X^{T}_{g_0}},  
\end{align} 
where 
$(g,h)=\mathcal{G} \big( (\bar{g}, \bar{h}) \big)$, $(e,f)=\mathcal{G} \big( (\bar{e}, \bar{f}) \big)$, 
and the exponent 
\begin{align}
\label{def:beta}
\beta=\min\{\beta_1, \beta_2, \beta_3\}.
\end{align}  
The constant $C_2 = C_2 \left(N, \sigma, 
K_1\right)$ is positive and depends only on the specified data.

Observe that, from 
\eqref{eq:higher-order-linear-g_l} and 
\eqref{eq:second-order-linear-h_l}, 
the difference $g-e$ satisfies the linear parabolic system:  
\begin{equation}
\label{eq:linear-g-e}
\begin{cases} 
\partial_t ( g_{l}-e_{l})+\partial_y^{4}( g_{l}-e_{l})=
G(\partial_y^{3} \bar{g}_{l}, \ldots, \bar{g}_{l})-G(\partial_y^{3} \bar{e}_{l}, \ldots, \bar{e}_{l}), 
\\
~~~~~~~~~~~~~~~~~~~~~~~~~~~~~~~~~~~~~~~~~~~~~~~~~~~~~~~~~~~~ 
\text{ in } (0,T) \times (0,1), ~ l \in\{1, \ldots, q\}, 
\\
(g_{l}-e_{l})(0,y)= 0, 
~~~~~~~~~~~~~~~~~~~~~~~~~~~~~~~~~~~~~~~~~~~~~~~~~~~~~~~~~~~~~ 
l \in \{1, \ldots, q\}, 
\\
 \partial_y(g_{1}-e_{1})(t, 0)=0,  
\\
(g_{l}-e_{l})(t, y^{\ast}_{l})= (g_{l+1}-e_{l+1})(t, y^{\ast}_{l}),   
~~~~~~~~~~~~~~~~~~~ 
l \in \{1, \ldots, q-1\}, 
y^{\ast}_{l}=2\{\frac{l}{2}\}, 
\\
\partial_{y}^{\mu}\left(g_{l}-e_{l}\right) (t,y^{\ast}_{l})
+(-1)^{\mu-1}\partial_{y}^{\mu} \left(g_{l+1}-e_{l+1}\right)(t,y^{\ast}_{l})=0,  
\\
~~~~~~~~~~~~~~~~~~~~~~~~~~~~~~~~~~~~~~~~~~~~~~~~~
\mu \in \{1, 2\}, 
~ l \in \{1, \ldots, q-1\}, ~
y^{\ast}_{l}=2\{\frac{l}{2}\},  
\\ 
\partial_{y}^{3}(g_{l}-e_{l})(t,y^{\ast}_l)+\partial_{y}^{3}( g_{l+1}-e_{l+1})(t,y^{\ast}_{l})= \frac{1}{\sigma^2}  \partial_{y} (\bar{h}_{l}-\bar{f}_{l})(t,y^{\ast}_{l}) 
\\
~~~~~~~~~~~~~~~~~~~~~~~~~ 
+b_2(\partial_{y}^2 \bar{g}_l, \partial_{y}\bar{g}_l, \bar{g}_l)(t, y^{\ast}_{l})-b_2(\partial_{y}^2 \bar{e}_l, \partial_{y}\bar{e}_l, \bar{e}_l)(t, y^{\ast}_{l}) 
\\
~~~~~~~~~~~~~~~~~~~~~~~~~ 
-b_2(\partial_{y}^2 \bar{g}_{l+1}, \partial_{y}\bar{g}_{l+1}, \bar{g}_{l+1})(t,y^{\ast}_{l})+b_2(\partial_{y}^2 \bar{e}_{l+1}, \partial_{y}\bar{e}_{l+1}, \bar{e}_{l+1})(t,y^{\ast}_{l}), 
\\
~~~~~~~~~~~~~~~~~~~~~~~~~~~~~~~~~~~~~~~~~~~~~~~~~~~~~~~~~~~~~~~~~~ 
l \in \{1, \ldots, q-1\}, 
y^{\ast}_{l}= 2\{\frac{l}{2}\}, 
\\  
\partial_y^2 (g_{q}-e_{q}) (t,2 \big\{ \frac{q}{2}\big\})
=b_1(\partial_y \bar{g}_q, \bar{g}_q)(t,2 \big\{ \frac{q}{2}\big\})-b_1(\partial_y \bar{e}_q, \bar{e}_q)(t,2 \big\{ \frac{q}{2}\big\}) 
\\ 
~~~~~ \text{ or } ~~~~~
\\ ~~~
(g_{q}-e_{q})(t,2 \big\{ \frac{q}{2}\big\})=0, 
\\
\partial_{y}^{3}(g_{q}-e_{q})(t,2 \big\{ \frac{q}{2}\big\}))= \frac{1}{\sigma^2}  \partial_{y} (\bar{h}_{q}-\bar{f}_{q})(t,2 \big\{ \frac{q}{2}\big\}) \\
~~~~~~~~~~~~~~~~~~~~~~~~~~~~~~~~~~~~
+b_2(\partial_{y}^2 \bar{g}_q, \partial_{y}\bar{g}_q, \bar{g}_q)(t, y^{\ast}_{l})-b_2(\partial_{y}^2 \bar{e}_q, \partial_{y}\bar{e}_q, \bar{e}_q)(t,2 \big\{ \frac{q}{2}\big\}),
\end{cases}
\end{equation} 
and the difference $h-f$ fulfills the linear parabolic system:   
\begin{equation}
\label{eq:linear-h-f}
\begin{cases} 
\partial_t ( h_{l}-f_{l})- \sigma^2  \partial_y^{2}( h_{l}-f_{l})
= \sigma^2 b_1(\partial_y \bar{h}_l, \bar{h}_l)
- \sigma^2 b_1(\partial_y \bar{f}_l, \bar{f}_l),  
\\
~~~~~~~~~~~~~~~~~~~~~~~~~~~~~~~~~~~~~~~~~~~~~~~~ 
\text{ in } (0,T) \times (0,1), l \in\{1, \ldots, q\}, 
\\
(h_{l}-f_{l})(0,y)= 0,  
~~~~~~~~~~~~~~~~~~~~~~~~~~~~~~~~~~~~~~~~~~~~~~~ 
l \in \{1, \ldots, q\}, 
\\
(h_{l}-f_{l})(t, y^{\ast}_l)= 0, 
~~~~~~~~~~~~~~~~~~~~~~~~~~~~~ 
l \in \{1, \ldots, q\}, 
~ y^{\ast}_l=2\{\frac{l+1}{2}\}, 
\\
(h_{l}-f_{l})(t, y^{\ast}_l)=(g_{l}-e_{l})(t, y^{\ast}_l),  
~~~~~~~~~~~~~~~ 
l \in \{1, \ldots, q\}, ~
y^{\ast}_l= 2\{\frac{l}{2}\},
\end{cases}
\end{equation} 
where $t \in [0,T]$ and $y \in [0,1]$.

Let $(g,h)=\mathcal{G}(\bar g,\bar h)$ and $(e,f)=\mathcal{G}(\bar e,\bar f)$.
Since the initial data satisfy
\[
\bar g_l(0,\cdot)=\bar e_l(0,\cdot)=g_{l,0}(\cdot),
\qquad
\bar h_l(0,\cdot)=\bar f_l(0,\cdot)=h_{l,0}(\cdot),
\]
and the compatibility conditions of order $0$ for \eqref{eq:higher-order-linear-g_l} and \eqref{eq:second-order-linear-h_l},  
the differences $g-e$ and $h-f$ 
have vanishing initial data and satisfy the compatibility conditions of order $0$ for \eqref{eq:linear-g-e} and \eqref{eq:linear-h-f}, respectively. 
It follows from Theorem 
\ref{thm:STE_for_linear-fitting} that $g-e$ and $h-f$ are the unique solutions to \eqref{eq:linear-g-e} and \eqref{eq:linear-h-f}, respectively. 
Moreover, the following estimate holds:
\begin{align}
\label{est:(g,h)-(e,f)}
&\norm{(g,h)-(e,f)}_{X^{T}_{(g_0,h_0)}} 
\\ \nonumber 
&\leq C_0 \cdot \bigg(\sum\limits^{q}_{l=1} 
\norm{ G(\partial_y^{3} \bar{g}_{l}, \ldots, \bar{g}_{l})-G(\partial_y^{3} \bar{e}_{l}, \ldots, \bar{e}_{l})}_{C^{\frac{\alpha_1}{4}, \alpha_1}([0,T] \times [0,1])}
\\ \nonumber 
&~~~~~~~~+\sigma^2 \sum\limits^{q}_{l=1} 
\norm{ b_1(\partial_y \bar{h}_l, \bar{h}_l)-b_1(\partial_y \bar{f}_l, \bar{f}_l) }_{C^{\frac{\alpha_2}{2}, \alpha_2}([0,T] \times [0,1])}
\\ \nonumber 
&~~~~~~~~+\frac{1}{\sigma^2}  \sum_{\substack{l=1, \ldots, q 
\\ y^{\ast}_l =2\{\frac{l}{2}\} }} 
\|\partial_y \bar{h}_l(\cdot, y^{\ast}_l) -\partial_y\bar{f}_l(\cdot, y^{\ast}_l)\|_{C^{\frac{1+\alpha_1}{4}}([0,T])} 
\\ \nonumber
&~~~~~~~~+ \sum_{\substack{(l,y^{\ast}) \in \{1, \ldots, q \} \times \{0,1\} \backslash \{(1,0)\}}} \| b_2(\partial_{y}^2\bar{g}_l, \partial_{y}\bar{g}_l, \bar{g}_l)(\cdot, y^{\ast})-b_2(\partial_{y}^2\bar{e}_{l}, \partial_{y}\bar{e}_{l}, \bar{e}_{l})(\cdot, y^{\ast})\|_{C^{\frac{1+\alpha_1}{4}}([0,T])} 
\\ \nonumber 
&~~~~~~~~+\big{\|} b_1 \big(\partial_{y} \bar{g}_q, \bar{g}_q)(\cdot, 2 \{ \frac{q}{2}\})-b_1 \big(\partial_{y} \bar{f}_q, \bar{f}_q) \big(\cdot, 2 \big\{ \frac{q}{2} \big\} \big)  \big{\|}_{C^{\frac{2+\alpha_1}{4}}([0,T])}  \bigg)
\text{.}
\end{align} 
Applying the algebraic identity 
\eqref{identity-a^n-b^n}, the triangle inequality in 
H\"{o}lder spaces, and the technical estimates from \cite[Remark~B.1, Lemma~B.2]{DLP21} and \cite[Lemma~5.10]{LT26} with $v=\bar g_l-\bar e_l$ or $v=\bar h_l-\bar f_l$, we obtain the following estimates:
\begin{align*}
& \|b_1(\partial_y \bar{h}_l, \bar{h}_l)- b_1(\partial_y\bar{f}_l, \bar{f}_l)\|_{ C^{\frac{\alpha_1}{4}, \alpha_1}([0,T] \times [0,1])}  \leq  
C\left(N, K_1\right) \cdot T^{\beta_1} \cdot \| \bar{h}_l-\bar{f}_l \|_{ C^{\frac{4+\alpha_1}{4}, 4+\alpha_1}([0,T]\times [0,1])}\\ \nonumber
&\norm{ G(\partial_y^{3} \bar{g}_{l}, \ldots, \bar{g}_{l})-G(\partial_y^{3} \bar{e}_{l}, \ldots, \bar{e}_{l})}_{C^{\frac{\alpha_1}{4}, \alpha_1}([0,T] \times [0,1])} \\
&~~~~~~~~~~~~~~~~~~~~~~~~~~~~~~~~~~~~~~~~~~~~~ \leq   C(N, K_1 )
\cdot 
\|\bar{g}_l- \bar{e}_l\|_{ C^{\frac{4+\alpha_1}{4}, 4+\alpha_1}([0,T] \times [0,1])} 
\cdot T^{\beta_1}, 
\\
\nonumber
&\|\partial_y \bar{h}_l(\cdot, y^{\ast}_l) - \partial_y\bar{f}_l(\cdot, y^{\ast}_l)\|_{C^{\frac{1+\alpha_1}{4}}([0,T])} 
\leq 2 T^{\beta_3} \|\bar{h}_l -\bar{f}_l \|_{C^{\frac{2+\alpha_2}{2}, 2+\alpha_2}([0,T] \times [0,1])} 
\\ 
\nonumber
&\| b_2(\partial_{y}^2\bar{g}_l, \partial_{y}\bar{g}_l, \bar{g}_l)(\cdot, y^{\ast})-b_2(\partial_{y}^2\bar{e}_{l}, \partial_{y}\bar{e}_{l}, \bar{e}_{l})(\cdot, y^{\ast})\|_{C^{\frac{1+\alpha_1}{4}}([0,T])}  
\\
&~~~~~ \leq  C(N, K_1 ) T^{\beta_2} \|\bar{g}_l -\bar{e}_l \|_{C^{\frac{4+\alpha_1}{4}, 4+\alpha_1}([0,T] \times [0,1])}, ~~~ \forall\, (l,y^{\ast}) \in \{1, \ldots, q \} \times \{0,1\} \backslash \{(1,0)\}, 
\\
& 
\big{\|} b_1 \big(\partial_{y} \bar{g}_q, \bar{g}_q)(\cdot, 2 \{ \frac{q}{2}\})-b_1 \big(\partial_{y} \bar{f}_q, \bar{f}_q) \big(\cdot, 2 \big\{ \frac{q}{2} \big\} \big)  \big{\|}_{C^{\frac{2+\alpha_1}{4}}([0,T])}
\\
& ~~~~~~~~~~~~~~~~~~~~~~~  ~~~~~~~~~~~~~~~~~~~~~~~~~~~~~~~~~\leq C(N, K_1 ) T^{\beta_2} \|\bar{g}_q -\bar{e}_q \|_{C^{\frac{4+\alpha_1}{4}, 4+\alpha_1}([0,T] \times [0,1])} 
\end{align*}
where $C(N, K_1)>0$ is constant.  
Together with 
\eqref{def:beta} and \eqref{est:(g,h)-(e,f)}, the above inequalities allow us to choose $t_0 = t_0 \left(N, \sigma, \alpha_1, \alpha_2, K_1\right)\in (0, t_1)$  small enough that 
$$ 
C_2 T^{\beta}<1,\, \forall\,  T\in(0, t_0]. 
$$ 
With such $t_0>0$ and from \eqref{eq:self_map} and \eqref{est:contraction-map-gamma+chi}, 
we conclude that the operator 
$$
\mathcal{G} :X^{T}_{(g_0,h_0)} \cap B_{K_1} \to X^{T}_{(g_0,h_0)} \cap B_{K_1}
$$ 
is a self-map and a strict contraction for any $T\in(0,t_0]$. 

By applying the Banach Fixed Point Theorem to the set $X^{t_{0}}_{(g_0,h_0)} \cap B_{K_1}$, we conclude that there exists a \emph{unique} fixed point
$(g,h) \in X^{t_{0}}_{(g_0,h_0)} \cap B_{K_1}$, which is also a solution to \eqref{eq:nonlinear-g-h_l}.

\item[\bf (ii).] {\bf Higher-Order Smoothness, i.e.,  
the higher-order smoothness of local solutions to \eqref{eq:nonlinear-g-h_l}}:

The interior smoothness
$g_l,h_l \in C^{\infty}\bigl((0,t_0]\times[0,1]\bigr)$ for all $l$ is obtained by a standard bootstrapping argument for linear parabolic systems, as in the previous section.
This relies on time cut-off functions, compatibility conditions, and repeated applications of Schauder estimates. Since this procedure has already been carried out in detail, we omit the repetition and refer the reader to \cite[Theorem~5.10]{LT26} for further details.

\item[\bf (iii).]{\bf Regularity of $U$}: 
From \eqref{eq:parametrization-chi}, we derive that  
\begin{align*}
U_{l}(t,x)=&g_{l}\left(t, (-1)^{l+1} 
\left(x-x_{2\left[\frac{l}{2}\right]} \right)\right) , 
~~~~~~~~~~~~~~~ x_{l-1} \leq x \leq x_{l}, 
~~ l \in \{1, \ldots, q\},
\\
V_{l}(t,x)=&h_{l}\left(t, (-1)^{l+1} 
\left(x-x_{2\left[\frac{l}{2}\right]} \right)\right) , 
~~~~~~~~~~~~~~~ x_{l-1} \leq x \leq x_{l} , 
~~ l \in \{1, \ldots, q\}.
\end{align*} 
The proof is obtained by applying the result in item (i). 
\end{itemize} 

\end{proof}

\subsection{The Long-Time Existence and Asymptotics }
\label{sec:LTE}

For tangent vector fields $v, v_{1}, \ldots, v_{k}$ on $\mathbf{SU}(N)$, we define  
\begin{equation*}
v_{1}\ast \cdot\cdot\cdot \ast v_{k}=\left\{
\begin{array}{l} 
\langle v_{i_{1}}, v_{i_{2}}\rangle \cdot \cdot \cdot \langle v_{i_{k-1}}, v_{i_{k}}\rangle \text{ \ \ \ \ \ \ \ \ ,\ for }k\text{\ even,} \\
\langle v_{i_{1}}, v_{i_{2}}\rangle \cdot \cdot \cdot \langle v_{i_{k-2}}, v_{i_{k-1}}\rangle \cdot v_{i_{k}}
\text{ , for }k\text{\, odd,}
\end{array}
\right.
\end{equation*}
where $i_{1}, \ldots, i_{k}$ is any permutation of $1, \ldots, k$, $\langle\cdot,\cdot\rangle$ is the inner product, which is defined in \eqref{eq:Matrix_inner_product}.
The notation $P_b^{a,c}(v)$ refers to a finite linear combination of terms
\[
D_x^{\mu_1} v \ast \cdots \ast D_x^{\mu_b} v,
\]
with $\mu_1+\cdots+\mu_b = a$ and $\max\{\mu_1,\ldots,\mu_b\}\le c$, whose coefficients are universal in the sense that they depend only on the bi-invariant Riemannian metric and its derivatives. Moreover, these coefficients and the number of terms in $P_b^{a,c}(v)$ are bounded from above and below by constants depending only on $a$ and $b$.
Note that the following identities follow directly from the definition: 
\begin{equation*}
\left\{
\begin{array}{l} 
D_{x}\left( P_{b_{1}}^{a_{1},c_{1}}\left( v \right) \ast P_{b_{2}}^{a_{2},c_{2}}\left( v \right) \right) 
=D_{x} P_{b_{1}}^{a_{1},c_{1}}\left( v \right) \ast P_{b_{2}}^{a_{2},c_{2}}\left( v \right) 
+P_{b_{1}}^{a_{1},c_{1}}\left( v \right) \ast D_{x} P_{b_{2}}^{a_{2},c_{2}}\left( v \right) \text{,} 
\\ \\
P_{b_{1}}^{a_{1},c_{1}}\left( v \right) \ast P_{b_{2}}^{a_{2},c_{2}}\left( v \right)
=P_{b_{1}+b_{2}}^{a_{1}+a_{2},max\{c_1,c_2\}}\left( v \right) 
\text{, }D_{x}P_{b_{2}}^{a_{2},c_{2}}\left( v \right) 
=P_{b_{2}}^{a_{2}+1,c_{2}+1}\left( v \right) 
\text{.} 
\end{array}
\right. 
\end{equation*} 
The notation $Q_{b}^{a,c}(U_{x})$ is introduced to simplify the summation of polynomials.
Namely,   
\begin{equation*}
Q_{b}^{a, c}(U_{x}) 
:= \sum_{ \substack{  [\![\mu, \nu]\!]\leq [\![a,b]\!] \\\xi \leq c} }   P_{\nu}^{\mu, \xi}(U_{x}) 
=\sum_{\mu=0}^{a}\sum_{\nu=1}^{2a+b-2\mu}\sum_{\xi=0}^{c}\text{ } P_{\nu}^{\mu, \xi}(U_{x})
\text{,} 
\end{equation*}  
where 
$[\![\mu,\nu]\!]:=2\mu+\nu$ 
and $a, c \in \mathbb{N}_{0}$, $b \in \mathbb{N}$. 
For simplicity, the notations $P_b^{a}(U_x)$ and $Q_b^{a}(U_x)$ are also used to denote the corresponding quantities $P_b^{a,a}(U_x)$ and $Q_b^{a,a}(U_x)$, respectively. 
For further details on the notation of the polynomials $P_b^{a}(U_x)$ and $Q_b^{a}(U_x)$, we refer the reader to \cite{LT26}.


\begin{proof}[{\bf The Proof of Theorem \ref{thm:Main_Thm_1}}]

Based on the short-time existence of smooth solutions to
\eqref{U_flow}$\sim$\eqref{Br_boundary-condition-7}, it remains to establish global existence and describe the asymptotic behaviour.
This is achieved by a contradiction argument based on higher-order energy estimates. 
Namely, assume by contradiction that the maximal existence time satisfies 
$t_{\max}<\infty$. We introduce the higher-order energy
\begin{equation}\label{eq:Z_mu}
\mathcal{Z}_{\mu}(t)
:=
\sum_{l=1}^q \|D_t^{\mu-1}\partial_t U_l\|^2
+\frac{1}{\sigma^2}\sum_{l=1}^q \|D_t^{\mu-1}\partial_t V_l\|^2 ,
\end{equation}
and show that $\mathcal{Z}_{\mu}(t)$ remains uniformly bounded on $[t_0,t_{\max})$ for every $\mu\in\mathbb{N}$, which yields a contradiction.

Following the same approach as in \cite{LT26}, specifically applying integration by parts twice together with \cite[Lemma 5.5]{LT26}, we obtain 
\begin{align*}
\dfrac{d}{dt}\frac{1}{2}\sum\limits_{l=1}^q \|  D_t^{\mu-1} \partial_t U_l\|^2_{L^2(I_l)} 
= & - \sum_{l=1}^{q}\int_{I_{l}} | D_{x}^{4\mu+1} U_{l, x} |^2 \text{ }dx +\sum_{l=1}^{q} \int_{I_{l}} Q^{8\mu, 4\mu+1 }_{4}(U_{l,x}) \text{ }dx 
\\ 
& -\sum\limits_{l=1}^q \langle D_t^{\mu} D_x^2 U_{l,x},  D_t^{\mu-1} \partial_t U_l \rangle_{|_{\partial{I}_l}}+\sum\limits_{l=1}^q \langle D_t^{\mu} D_x U_{l,x}, D_x D_t^{\mu-1} \partial_t U_l \rangle_{|_{\partial{I}_l}}.
\end{align*}
Similarly, we have 
\begin{align*}
\dfrac{d}{dt}\frac{1}{2}  \sum\limits_{l=1}^q \|  D_t^{\mu-1} \partial_t V_l\|^2_{L^2(I_l)}
=& -\sum_{l=1}^{q}\int_{I_{l}} | D^{2\mu}_x  V_{l, x}|^2 \text{ }dx +\sum_{l=1}^{q} \int_{I_{l}} Q^{4\mu-2, 2\mu-1 }_{4}(V_{l,x}) \text{ }dx 
\\
& -\sum\limits_{l=1}^q \langle D_t^{\mu}  V_{l,x},  D_t^{\mu-1} \partial_t V_l \rangle_{|_{\partial{I}_l}}.
\end{align*} 
By combining the two identities above together with the boundary conditions \eqref{Br_clampend-pass-point}$\sim$\eqref{Br_boundary-condition-7}, we have 
\begin{align*}
\dfrac{d}{dt}\mathcal{Z}_\mu(t)=& - \sum_{l=1}^{q}\int_{I_{l}} | D_{x}^{4\mu+1} U_{l, x} |^2 \text{ }dx -\frac{1}{\sigma^2}\sum_{l=1}^{q}\int_{I_{l}} | D^{2\mu}_x  V_{l, x}|^2 
\, dx
\text{.}
\end{align*} 
From these identities, and by applying \cite[Lemma 5.13]{LT26} together with Young inequaltiy \cite[Lemma A.1]{DLP19} with $\varepsilon=\frac{1}{2}$, we find
\begin{align*} 
\nonumber
\frac{d}{dt}\mathcal{Z}_{\mu}(t)
+&\mathcal{Z}_{\mu}(t)
+2\sum_{l=1}^{q}\int_{I_{l}}| D_{x}^{4\mu+1} U_{l,x}|^2 \text{ }dx
+ \frac{2}{\sigma^2}\sum_{l=1}^{q}\int_{I_{l}} | D_{x}^{2\mu}V_{l, x} |^2 \text{ }dx\\ \nonumber
=&\sum_{l=1}^{q} \int_{I_{l}} Q^{8\mu, 4\mu+1 }_{4}(U_{l,x}) \text{ }dx +\sum_{l=1}^{q} \int_{I_{l}} Q^{4\mu-2, 2\mu-1}_{2}(V_{l,x}) \text{ }dx 
\\
& \leq  \frac{1}{2} 
\left( 
\sum_{l=1}^{q}\int_{I_{l}} | D_{x}^{4\mu+1} U_{l, x} |^2 \text{ }dx+
 \frac{1}{\sigma^2}\sum_{l=1}^{q}\int_{I_{l}} | D_{x}^{2\mu} V_{l, x} |^2 \text{ }dx \right) 
+C\left(N, p_0, \sigma, \mathcal{F}_{\sigma}(t_0)\right), 
\end{align*} 
where $\mathcal{F}_{\sigma}(t_0)=\mathcal{F}_{\sigma}\left([U(t_0,\cdot),V_1(t_0,\cdot), \ldots, V_{q}(t_0,\cdot)]\right)$, which implies that
\begin{align}
\label{inequality-D^{j-1}_t-partial_{t}gamma-fitting}
\frac{d}{dt}\mathcal{Z}_{\mu}(t)&+\mathcal{Z}_{\mu}(t) \leq C\left(N, p_0, \sigma, \mathcal{F}_{\sigma}(t_0) \right). 
\end{align} 
By applying Gr\"{o}nwall inequality to \eqref{inequality-D^{j-1}_t-partial_{t}gamma-fitting}, we obtain 
\begin{align}
&\mathcal{Z}_{\mu}(t)
\leq C\left(N, p_0, \mu, \mathcal{F}_{\sigma}(t_0), 
\mathcal{Z}_{\mu}(t_0) \right).
\label{eq:mainesti} 
\end{align} 
Then, from applying \cite[Lemma 5.4]{LT26} and \cite[Lemma $A.1$]{DLP19} to \eqref{eq:mainesti}, we obtain 
\begin{equation}\label{unif-bdd:D-2kj+k-3-derivative} 
\begin{cases}
\sum\limits_{l=1}^{q} \| D_{x}^{4\mu-1} U_{l,x} \|^2_{L^2(I_{l})} 
\leq  C\left(N, \mu, \mathcal{F}_{\sigma}(t_0), 
\mathcal{Z}_{\mu}(t_0) \right),
\\
\sum\limits_{l=1}^{q} \| D_{x}^{2\mu-1} V_{l,x} \|^2_{L^2(I_{l})} 
\leq C\left(N, p_0, \mu, \mathcal{F}_{\sigma}(t_0), 
\mathcal{Z}_{\mu}(t_0) \right).
\end{cases}
\end{equation}
Thus, the required uniform estimates for \eqref{eq:Z_mu} are derived for any $\mu\in\mathbb{N}$.

The asymptotic behaviour of global solutions follows by arguments analogous
to those in \cite{LT26}. The uniform bounds on higher-order derivatives of
$U_l$ and $V_l$, see \eqref{unif-bdd:D-2kj+k-3-derivative}, yield the existence
of a sequence $t_j\to\infty$ and a limit $U_\infty$ such that
$U(t_j,\cdot)\to U_\infty$.
Moreover, the uniform estimates
\eqref{inequality-D^{j-1}_t-partial_{t}gamma-fitting} and \eqref{eq:mainesti}
imply $\mathcal{Z}_1\in L^1([0,\infty))$ and $\mathcal{Z}_1(t)\to0$ as
$t\to\infty$. 
This completes the proof; for further details, we refer the reader to \cite{LT26}. 

\end{proof}

{\bf Acknowledgement.} 
This project is supported by the following grants:
C.-C. L. received support from the research grant 
(NSTC 113-2115-M-003-014-MY3) of the National Science and Technology Council (NSTC) of Taiwan, and the National Center for Theoretical Sciences (NCTS), Taiwan. 
T. D. T expresses his gratitude to the scholarship fund for postdoctoral fellows of the VNU University of Science for their support. He would also like to thank the Vietnam Institute for Advanced Study in Mathematics (VIASM) for its hospitality and for the excellent working conditions.

{\bf Data availability.} 
Data sharing is not applicable to this article, as no datasets were generated or analyzed during the current study.

{\bf Declarations.} 
The authors declare no conflicts of interest related to this publication.


\begin{thebibliography}{99}  


\bibitem{BCC21}
A. Bloch, M. Camarinha, and L. Colombo, 
Variational point-obstacle avoidance on Riemannian manifolds. 
\textit{Math. Control Signals Systems}, 33 (2021), no. 1, 109--121. 

\bibitem{BHM12} D. C. Brody, D. D. Holm, and D. M. Meier, 
Quantum splines, \textit{Phys. Rev. Lett. }, 109, 100501 (2012). 


\bibitem{CH24} A. Cabrera and R. L. Hatton, 
Optimal control of robotic systems and biased Riemannian splines.  
\textit{ESAIM Control Optim. Calc. Var.} 30 (2024), no. 36.


\bibitem{CSC22} M. Camarinha, F. Silva Leite, and P. Crouch,  Riemannian cubics close to geodesics at the boundaries.  \textit{J. Geom. Mech.} 14 (2022), no. 4, 545--558. 


\bibitem{DLP19} 
A. Dall'Acqua, C.-C. Lin, and P. Pozzi, Flow of elastic networks: long time existence results. 
\textit{Geom. Flows.} 4 (2019), no. 1, 83--136. 

\bibitem{DLP21} 
A. Dall'Acqua, C.-C. Lin, and P. Pozzi, Elastic flow of networks: short-time existence result. 
\textit{J. Evol. Equ.} 21 (2021), no. 2, 1299--1344. 


\bibitem{GHMRV12-1} F. Gay-Balmaz, D. D. Holm, D. M. Meier, T. S. Ratiu, and F.-X. Vialard, Invariant higher-order variational problems, \textit{Commun. Math. Phys.} 309 (2012), 413--458.
 
 
\bibitem{GHMRV12-2}F. Gay-Balmaz, D. D. Holm, D. M. Meier, T. S. Ratiu, and F.-X. Vialard, Invariant higher-order variational problems II, \textit{J. Nonlinear Sci.} 22 (2012), 553--597.

\bibitem{GGP03} R. Giamb\`{o}, F. Giannoni, and P. Piccione, Optimal Control on Riemannian Manifolds by Interpolation, \textit{Math. Control Signals Syst.} 16 (2003), no. 4,  278--296. 

\bibitem{HRW19} B. Heeren, M. Rumpf, and B. Wirth, Variational time discretization of Riemannian splines. \textit{IMA J. Numer. Anal.} 39 (2019), no. 1, 61--104. 

\bibitem{LT26} C.-C. Lin and D. T. Tran, 
Higher-order Riemannian spline interpolation problems: a unified approach via gradient flows, \textit{Math. Ann.} 394 (2026), no. 61.  

\bibitem{MTM17} L. Machado, T. Teresa, and M. Monteiro, A numerical optimization approach to generate smoothing spherical splines. \textit{J. Geom. Phys.} 111 (2017), 71--81. 

\bibitem{Milnor63} 
J. Milnor, Morse theory, 
\textit{Ann. of Math. Stud.}, No. 51, 
Princeton University Press, Princeton, NJ, 1963.

\bibitem{N03} L. Noakes, Null cubics and Lie quadratics. \textit{J. Math. Phys.} 44 (2003), no. 3, 1436--1448. 


\bibitem{NHP89} L. Noakes, G. Heinzinger, and B. Paden, Cubic splines on curved spaces. \textit{IMA J. Math. Control Inform.} 6 (1989), no. 4, 465--473. 


\bibitem{Nomizu54}
K. Nomizu, 
Invariant affine connections on homogeneous spaces. 
\textit{Amer. J. Math.}, 76 (1954), 33--65. 



\bibitem{Solonnikov65} V. A. Solonnikov, Boundary Value Problems of Mathematical Physics. III. 
\textit{Proceedings of the Steklov institute of Mathematics (1965), Amer. Math. Soc., Providence, R. I.}, No. 83, 1967. 
\end{thebibliography}
\end{document}